\newtheorem{theorem}{Theorem}[section]
\newtheorem{proposition}[theorem]{Proposition}
\newtheorem{remark}[theorem]{Remark}
\newtheorem{assumption}[theorem]{Assumption}
\numberwithin{equation}{section}
\def\ds{\displaystyle}
\def\N{\mathbb {N}}
\def\R{\mathbb {R}}
\newcommand\Inter[1]{{\stackrel{\circ}{#1}}}
\newcommand\pscal[2]{\left\langle \, #1  \mid  #2 \, \right\rangle}	
\newcommand{\vleq}{\rotatebox[origin=c]{-90}{$\leq$}}
\date{}
\title{Double resonance in Sturm-Liouville planar boundary value problems}
\author{Andrea Sfecci
\thanks{Dipartimento di Scienze Matematiche,
Universit\`a Politecnica delle Marche, Via Brecce Bianche 1, 60131 Ancona -
Italy. email: sfecci@dipmat.univpm.it }}
\begin{document}

\maketitle

\begin{abstract}
We provide some existence results for Sturm-Liouville boundary value problems associated with the planar differential system $Jz'=g(t,z) + r(t,z)$ where $g$ is suitably controlled by the gradient of two positively homogeneous functions of degree 2 and $r$ is bounded. We study the existence of solutions when a double resonance phenomenon occurs by the introduction of Landesman-Lazer type of conditions. Applications to scalar second order differential equations are given.
\end{abstract}

Keywords: Positively homogeneous planar systems, Sturm-Liouville boundary value problems, Dirichlet problem, shooting method, double resonance, Landesman-Lazer conditions.

Mathematics Subject Classification 2010: 34B15, 37E45, 34A34.

\section{Introduction}\label{intro}

For the scalar equation
\begin{equation}\label{equno}
x''+f(t,x)=0
\end{equation}
with perdiodic, Neumann or Dirichlet boundary conditions
there have been a lot of researches concerning the existence of solutions under some nonresonance conditions.

The approach to resonance is a delicate problem and the most successful condition have been introduced by Landesman and Lazer, when the nonlinearity asymptotically lies between two eigenvalues of the linear differential equation, see e.g.~\cite{FaFo1990,FaFo1992}.
In the case of asymmetric nonlinearities we mention~\cite{L4} for the periodic case and~\cite{FoGar2011} for planar systems (see also the monograph~\cite{Fondabook} for further informations on this topic).

Even though double resonance phenomenon has been studied dealing with periodic boundary conditions, such a discussion has not been treated for Dirichlet and Neumann boundary conditions yet. In this paper we are going to present some existence results at double resonance for equation
\eqref{equno} 
when $f$ satisfies
\begin{equation}\label{asym1}
0<\nu_1\leq\liminf_{x\to-\infty} \frac{f(t,x)}{x}\leq\limsup_{x\to-\infty} \frac{f(t,x)}{x} \leq \nu_2\,,
\end{equation}
\begin{equation}\label{asym2}
0<\mu_1 \leq\liminf_{x\to+\infty} \frac{f(t,x)}{x}\leq\limsup_{x\to+\infty} \frac{f(t,x)}{x} \leq \mu_2
\end{equation}
(see Theorem~\ref{TD} below).

In such a situation the nonlinearity $f$ ``mimes'' an asymmetric oscillator
$$x''+\mu x^+- \nu x^-=0\,,$$
where $x^+=(|x|+x)/2$ and $x^-=(|x|-x)/2$. The previous scalar differential equation can be studied as a planar system of the type
\begin{equation}\label{plan-auto}
Jz' = \nabla V(z)\,, \quad z\in\R^2\,,
\end{equation}
where  $
J={\left(
\begin{array}{cc} 0&-1\\1&0 \end{array}
\right)}
$
is the standard symplectic matrix
and
$V:\R^2\to\R$ is a positively homogeneous $C^1$-function of degree 2, i.e. such that
$$
0<V(\lambda z )= \lambda^2 V(z)\,, \qquad \text{for every } \lambda>0\,, \, z\neq 0\,.
$$
For this reason, boundary value problems related to~\eqref{plan-auto} present a particular interest in literature, see e.g.~\cite{BosGar,Fonda2004,FoGar2013,Tom} and the references therein.

In relation with the scalar second order differential equation \eqref{equno},
the Dirichlet boundary conditions $x(0)=x(T)=0$ (DBC), 
the Neumann boundary conditions $x'(0)=x'(T)=0$ (NBC)
and the mixed boundary conditions $x'(0)=x(T)=0$ (MBC)
can be collected all together in a unique class of problems when we pass to consider planar systems as in~\eqref{plan-auto}. 
Indeed, we can ask a solution $z(t)=(x(t),y(t))$ to start and arrive at some points belonging to two lines in the plane:
\begin{equation}\label{boundary-i}
z(0)\in l_S\,, \qquad 
z(T)\in l_A\,,
\end{equation}
where $l_S$ is the {\sl starting line} and $l_A$ is the {\sl arrival line}. 
In particular 
(DBC) is equivalent to the case $l_S=l_A=\{z=(x,y)\mid x=0\}$,
(NBC) is equivalent to the case $l_S=l_A=\{z=(x,y)\mid y=0\}$ and
(MBC) is equivalent to the case $l_S=\{z=(x,y)\mid y=0\}$ and $l_A=\{z=(x,y)\mid x=0\}$.

\medbreak

In~\cite{BosGar,FoGar2013} the following class of problems, obtained as a perturbation of~\eqref{plan-auto}, is treated: 
\begin{equation}\label{A1}
\begin{cases}
J z' = \nabla V(z) + p(t,z)\,,\\
z(0)\in l_S\,, \qquad 
z(T)\in l_A\,,
\end{cases}
\end{equation}
where, for briefness, we say $p$ is bounded and continuous.
%

\medbreak

Recalling that the unperturbed system~\eqref{plan-auto}
has an isochronous center of minimal period $\tau_V$, and borrowing the definition from~\cite{BosGar}, we say that the unperturbed problem
\begin{equation}\label{A2}
\begin{cases}
J z' = \nabla V(z)\,,\\
z(0)\in l_S\,, \qquad 
z(T)\in l_A\,.
\end{cases}
\end{equation}
is {\sl resonant} if it has at least one nontrivial solution.
As in the periodic case, if problem~\eqref{A2} is not resonant then a perturbed problem as in~\eqref{A1} admit a solution, cf.~\cite{FoGar2013}. Conversely, if the unperturbed problem~\eqref{A2} is resonant, then the existence of a solution to problems as in~\eqref{A1} is ensured assuming an additional condition: in~\cite{BosGar} the introduction of a Landesman-Lazer type of assumption provides an existence result. 
In these notes we continue the study performed in~\cite{BosGar,FoGar2013} by Boscaggin, Fonda and Garrione.
In particular, we are going to consider the wider class of problems
\begin{equation}\label{plan-mine}
\begin{cases}
J z' = g(t,z) + p(t,z)\,,\\
z(0)\in l_S\,, \qquad 
z(T)\in l_A\,,
\end{cases}
\end{equation}
where the function $g$ is controlled by two positively homogeneous functions of degree 2, $V_1\leq V_2$: More precisely, $g$ satisfies
$$g(t,z)=\big(1-\gamma(t,z)\big) \nabla V_1(z) + \gamma(t,z) \nabla V_2(z)\,,$$
where
$\gamma:[0,T]\times \R^2\to\R$, with $0\leq\gamma\leq 1$
 and
$p:[0,T]\times\R^2\to\R^2$ is sublinear with respect to the second variable.

The study of existence of solutions to problem~\eqref{plan-mine} is related to the study of perturbed asymmetric oscillators, e.g. differential equations as $x''+\mu x^+- \nu x^-+g(x)=e(t)$ where $g$ and $e$ are bounded continuous functions. In particular, \eqref{plan-mine} includes the previously mentioned scalar differential equation \eqref{equno} with $f$ obeying to~\eqref{asym1} and~\eqref{asym2}, as a particular case.
Such a type of problems presents a wide literature, see e.g.~\cite{L1a,L1b,L2,L3,L4,L5,Fonda2004,Fondabook,L6,L7,L8,L9} for a nonexhaustive bibliography, apologizing for unavoidable missing references.


\medbreak

Systems as in~\eqref{plan-mine} have been investigated in~\cite{FoGar2011} by Fonda and Garrione dealing with periodic boundary conditions (see also~\cite{FaFo2005,FoMaw2006}).
In the periodic setting, the existence of solutions can be ensured if there exists a positive integer $k$ such that
\begin{equation}\label{tauper}
\tfrac{T}{k+1} \leq \tau_{V_2} \leq \tau_{V_1} \leq \tfrac{T}{k}\,,
\end{equation}
where $\tau_{V_1}$ and $\tau_{V_2}$ are the periods of the solutions of system~\eqref{plan-auto} choosing respectively $V=V_1$ and $V=V_2$.

In such a situation we can distinguish three situations: nonresonance, when we have the strict inequalities in~\eqref{tauper}; simple resonance, when we have a strict inequality and an equality in~\eqref{tauper}; double resonance, when both equalities hold in~\eqref{tauper}.
In presence of resonance we need to add additional assumptions, e.g. of Landesman-Lazer type, as suggested in~\cite{FoGar2011} (see also~\cite{L4,FaFo1990,FaFo1992,S-ampa,S-ans} for related results).

\medbreak

In this paper, dealing with system~\eqref{plan-mine}, we investigate all the three situations: nonresonance, simple resonance and double resonance, which will be treated in Section~\ref{mainresults}. The situations differ depending on the position of the value $T$ in~\eqref{plan-mine} with respect to a {\sl resonance set} which will be introduced in~\eqref{union}.

\medbreak

The paper is organized as follows.
In Section~\ref{prel} we present some preliminary results: in Section~\ref{auto} some properties of the autonomous system~\eqref{plan-auto} are listed borrowing some notations from~\cite{BosGar,FoGar2013}, then in Section~\ref{pert} we add a first perturbation presenting some properties of solutions of systems~\eqref{plan-mine} in the the {\sl semi-autonomous} case $r\equiv 0$. We present the main Theorems~\ref{thm-nonres} (nonresonance), \ref{thmLL} (simple resonance) and~\ref{thmLL2} (double resonance) in the successive Sections~\ref{secnonres}, \ref{secsimres} and~\ref{secdoubleres}.
Finally, in Section~\ref{secapp}, we present the applications of our theorems in the case of scalar equations \eqref{equno} with an asymmetric nonlinearity, cf. Theorem~\ref{TD}.

\medbreak

Let us here introduce some notations. We will denote by $|\cdot|$ the Euclidean norm in $\R^2$ and we will use the complex notation for polar coordinates in the plane, i.e. $z=(x,y)=\rho e^{i\vartheta}=(\rho\cos\vartheta,\rho\sin\vartheta)$. Moreover, in order to well define the angle $\vartheta$ when we pass to polar coordinates, we will consider functions $z:I\to\R^2$ such that $z(t)\neq(0,0)$ for every $t\in I$. For briefness we call them {\em never-zero} functions.

\section{Preliminaries}\label{prel}

\subsection{An autonomous isochronous planar system}\label{auto}

In this section we recall some notations and contents from~\cite{BosGar,FoGar2013}.
Let us consider the planar system
\begin{equation}\label{iso}
J z' = \nabla V(z)\,, \qquad z=(x,y)\in\R^2\,,
\end{equation}
where  $
J={\left(
\begin{array}{cc} 0&-1\\1&0 \end{array}
\right)}
$
is the standard symplectic matrix and $V:\R^2\to\R$ is a $C^1$-function  which is positively homogeneous of degree $2$, i.e.
\begin{equation}\label{poshom2}
0<V(\lambda z )= \lambda^2 V(z)\,, \qquad \text{for every } \lambda>0\,, z\neq 0\,.
\end{equation}
Let us recall the validity of the Euler's formula: $\pscal{\nabla V(z)}{z} = 2V(z)$ for every $z\in\R^2$.

System~\eqref{iso} is an isochronous center of minimal period
\begin{equation}\label{period}
\tau_V = \int_0^{2\pi} \frac{d\theta}{2V(\cos\theta,\sin\theta)}
\end{equation}
and all the solutions have the form $z(t) = C \varphi_V(t+\tau)$, with $C\geq 0$ and $\tau\in[0,\tau_V)$, where $\varphi_V$ is a fixed nontrivial solution to~\eqref{iso}. Without loss of generality we assume $V(\varphi_V(t))\equiv \frac 12$ and $\varphi_V(0)=(0,y_0)$ with $y_0>0$.

Let us consider the following boundary condition
\begin{equation}\label{boundary}
z(0)\in l_S\,, \qquad 
z(T)\in l_A\,,
\end{equation}
where $l_S$ and $l_A$ (``$S$'' stands for {\sl starting}, ``$A$'' for {\sl arrival}) are lines through the origin of slope $\zeta_S$ and $\zeta_A$, respectively. We mean that a line through the origin has slope 
$\zeta\in(-\pi/2,\pi/2]$ if it can be parametrized as $l:\R\to\R^2$, $l(s)=s(\cos\zeta,\sin\zeta)$.

For later purpose, let us introduce
\begin{equation}\label{deltazeta}
\Delta\zeta = \begin{cases}
\zeta_S-\zeta_A & \text{if } \zeta_S > \zeta_A\\
\zeta_S-\zeta_A + \pi & \text{if } \zeta_S \leq \zeta_A\,,
\end{cases}
\end{equation}
which is the smallest positive angle a solution covers moving from $l_S$ to $l_A$, cf. Figure~\ref{fig:tau} (remember that solutions rotate clockwise).

Let us now recall some notations:
\begin{itemize}
\item $\tau_{0,V}$ is the least nonnegative time such that $\varphi_V(\tau_{0,V})\in l_S$,
\item $\tau_{1,V}$ is the least positive time such that $\varphi_V(\tau_{0,V}+\tau_{1,V})\in l_A$,
\item $\sigma_{1,V}$ is the least nonnegative time such that $\varphi_V(\tau_{0,V}+\tau_{1,V}+\sigma_{1,V})\in l_S$,
\item $\tau_{2,V}$ is the least positive time such that $\varphi_V(\tau_{0,V}+\tau_{1,V}+\sigma_{1,V}+\tau_{2,V})\in l_A$,
\item $\sigma_{2,V}$ is the least nonnegative time such that $\varphi_V(\tau_{0,V}+\tau_{1,V}+\sigma_{1,V}+\tau_{2,V}+\sigma_{2,V})\in l_S$.
\end{itemize}
Notice that, by definition,
\begin{equation}\label{taulaps}
\tau_V = \tau_{1,V}+\sigma_{1,V}+\tau_{2,V}+\sigma_{2,V}
\end{equation}
and in particular, as in~\eqref{period}, we have
\begin{equation}\label{starstar}
\begin{array}{cc}
\ds
\tau_{1,V}= \int_{\zeta_S-\Delta\zeta}^{\zeta_S} \frac{d\theta}{2V(\cos\theta,\sin\theta)}\,,&\ds
\tau_{2,V}= \int_{\zeta_S+\pi-\Delta\zeta}^{\zeta_S+\pi} \frac{d\theta}{2V(\cos\theta,\sin\theta)}\,,\\ \\
\ds
\sigma_{1,V}= \int^{\zeta_S-\Delta\zeta}_{\zeta_S-\pi} \frac{d\theta}{2V(\cos\theta,\sin\theta)}\,,&\ds
\sigma_{2,V}= \int^{\zeta_S+\pi-\Delta\zeta}_{\zeta_S} \frac{d\theta}{2V(\cos\theta,\sin\theta)}\,,
\end{array}
\end{equation}
As a consequence, if $l_S$ and $l_A$ coincide, then $\sigma_{1,V}=\sigma_{2,V}=0$.

In order to distinguish the two semi-lines which $l_A$ and $l_S$ consist of respectively, let us introduce the following notations (cf. Figure~\ref{fig:tau})
\begin{equation}\label{semilines}
\begin{array}{ll}
l_S^1 \ni \varphi_V(\tau_{0,V}) \,, &
l_A^1 \ni \varphi_V(\tau_{0,V}+\tau_{1,V})\,,\\[2mm]
l_S^2 \ni \varphi_V(\tau_{0,V}+\tau_{1,V}+\sigma_{1,V}) \,, &
l_A^2 \ni \varphi_V(\tau_{0,V}+\tau_{1,V}+\sigma_{1,V}+\tau_{2,V})\,.
\end{array}
\end{equation}

\begin{figure}[t]
\centerline{\epsfig{file=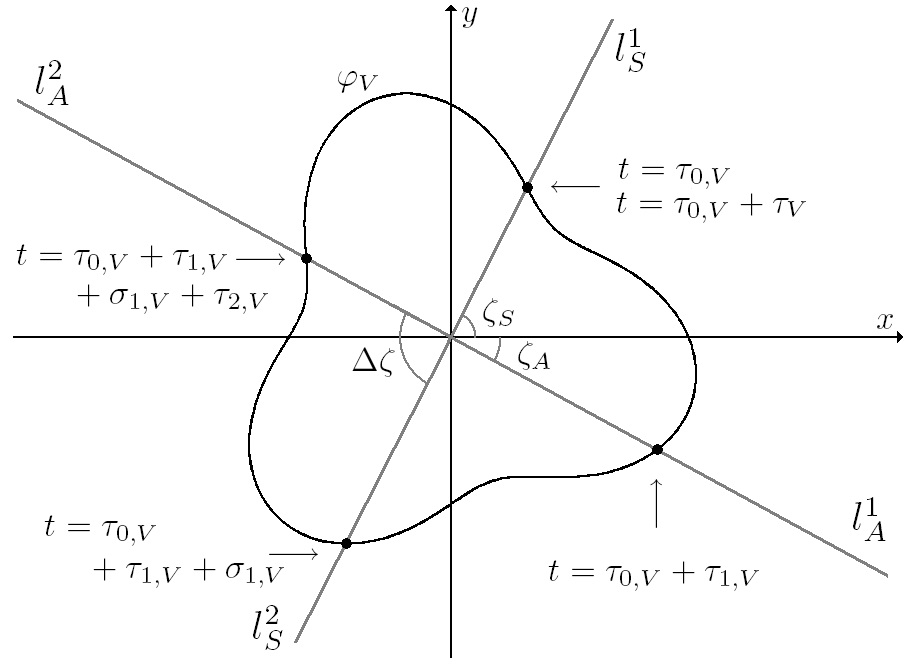, width = 10 cm}}
\caption{The notation of Section~\ref{auto}, a sketch in the case $\zeta_A < \zeta_S$.
}
\label{fig:tau}
\end{figure}

The problem
\begin{equation}\label{problem}
\begin{cases}
J z' = \nabla V(z)\,,\\
z(0)\in l_S\,, \qquad 
z(T)\in l_A\,,
\end{cases}
\end{equation}
is said to be {\sl resonant} if it admits nontrivial solutions. Such a situation occurs if and only if $T$ satisfies at least one of the following identities, for a certain $k\in\N$,
\begin{eqnarray}
T&=& k\tau_V + \tau_{1,V}\,,														 	\label{resonancesetV1}\\
T&=& k\tau_V + \tau_{1,V}+\sigma_{1,V}+\tau_{2,V}\,, 			\label{resonancesetV2}\\
T&=& k\tau_V + \tau_{2,V}\,, 															\label{resonancesetV3}\\
T&=& k\tau_V + \tau_{2,V}+\sigma_{2,V}+\tau_{1,V}\,. 			\label{resonancesetV4}
\end{eqnarray}
and a nontrivial solution is given by
\begin{eqnarray}
\varphi_V(t+\tau_{0,V}) && \text{if ~\eqref{resonancesetV1} or~\eqref{resonancesetV2} holds,} \label{eigen12}\\
\varphi_V(t+\tau_{0,V}+\tau_{1,V}+\sigma_{1,V}) && \text{if ~\eqref{resonancesetV3} or~\eqref{resonancesetV4} holds.}\label{eigen34}
\end{eqnarray}
Indeed, we can distinguish different starting semilines and arrival semilines:
\begin{eqnarray*}
\text{\eqref{resonancesetV1} holds} \Rightarrow \text{the solution starts from } l_S^1 \text{ and arrives on } l_A^1\,,\\
\text{\eqref{resonancesetV2} holds} \Rightarrow \text{the solution starts from } l_S^1 \text{ and arrives on } l_A^2\,,\\
\text{\eqref{resonancesetV3} holds} \Rightarrow \text{the solution starts from } l_S^2 \text{ and arrives on } l_A^2\,,\\
\text{\eqref{resonancesetV4} holds} \Rightarrow \text{the solution starts from } l_S^2 \text{ and arrives on } l_A^1\,.
\end{eqnarray*}

\subsection{Introducing a perturbation in the energy}\label{pert}

We now focus our attention on the qualitative properties of solutions to the boundary value problem
\begin{equation}\label{system}
\begin{cases}
J z' = g(t,z)\,,\\
z(0)\in l_S\,, \qquad 
z(T)\in l_A\,,
\end{cases}
\end{equation}
where the function $g:[0,T]\times \R^2 \to \R^2$ is suitably controlled by two Hamiltonians $V_1$ and $V_2$ as in the previous section. More precisely, we introduce the following assumption.

\begin{assumption}\label{ass1}
Assume that there exists a $L^2$-Carath\'eodory function $\gamma:[0,T]\times \R^2\to[0,1]$ such that
$$
g(t,z)=(1-\gamma(t,z)) \nabla V_1(z) + \gamma(t,z) \nabla V_2(z)
$$
where $V_1\leq V_2$ are two positively homogeneous $C^1$-functions as in~\eqref{poshom2}.
\end{assumption}

Let us now borrow the notations of the previous section, cf.~\eqref{taulaps}, and we define the values
\begin{eqnarray}
\begin{CD}
\tau_{V_2} &=& \tau_{1,V_2} &+& \sigma_{1,V_2} &+&\tau_{2,V_2} &+&\sigma_{2,V_2}\,, \\
\vleq &\qquad& \vleq &\qquad& \vleq &\qquad& \vleq &\qquad& \vleq \\
\tau_{V_1} &=& \tau_{1,V_1} &+& \sigma_{1,V_1} &+&\tau_{2,V_1} &+&\sigma_{2,V_1}\,. \\
\end{CD}
\end{eqnarray}

Let us set
\begin{equation}
\begin{array}{ll}
\alpha_{2k} = \min\{a_{2k}^1\,,\, a_{2k}^2 \}\,,
& a_{2k}^1= k\tau_{V_2} +\tau_{1,V_2}\,, \\[1mm]
& a_{2k}^2= k\tau_{V_2} + \tau_{2,V_2}\,; \\[2mm]
\beta_{2k}= \max\{b_{2k}^1\,,\, b_{2k}^2 \}\,, 
& b_{2k}^1= k\tau_{V_1} +\tau_{1,V_1}\,, \\[1mm]
& b_{2k}^2= k\tau_{V_1} + \tau_{2,V_1}\,; \\[2mm]
\alpha_{2k+1} = \min\{a_{2k+1}^1\,,\, a_{2k+1}^2 \}\,,
& a_{2k+1}^1= k\tau_{V_2} +\tau_{1,V_2}+\tau_{2,V_2} + \sigma_{1,V_2}\,, \\[1mm]
& a_{2k+1}^2= k\tau_{V_2} +\tau_{1,V_2}+\tau_{2,V_2} + \sigma_{2,V_2}\,; \\[2mm]
\beta_{2k+1} = \max\{b_{2k+1}^1\,,\, b_{2k+1}^2 \}\,,
& b_{2k+1}^1= k\tau_{V_1} +\tau_{1,V_1}+\tau_{2,V_1} + \sigma_{1,V_1}\,, \\[1mm]
& b_{2k+1}^2= k\tau_{V_1} +\tau_{1,V_1}+\tau_{2,V_1} + \sigma_{2,V_1}\,; \\
\end{array}
\end{equation}
and define the intervals 
$I_j= [\alpha_j \,,\, \beta_j]$, $j\in \N$.
Notice that all the intervals are well ordered in the following sense:
$\alpha_j < \alpha_{j+1}$ and $\beta_j < \beta_{j+1}$ for every $j\in\N$,

We introduce the {\sl resonance set}
\begin{equation}\label{union}
\mathcal I= \bigcup_{j\in\N} I_j = \bigcup_{j\in\N} [\alpha_j \,,\, \beta_j] \,, 
\end{equation}
the interior of $\mathcal I$, denoted by $\Inter{\mathcal I}$, and
\begin{equation}\label{uniontilde}
\widetilde{\mathcal I} = \bigcup_{j\in\N} (\alpha_j \,,\, \beta_j) \,.
\end{equation}
Notice that $\widetilde{\mathcal I}\subseteq \Inter{\mathcal I}\subseteq \mathcal I$ and so $\partial \mathcal I\subseteq \partial\widetilde{\mathcal I}$.

In this paper we are going to treat the following situations:
\begin{itemize}
\item {\sl Nonresonance}: $T\notin \mathcal I$, i.e. $\exists \kappa\in\N$ such that $\beta_\kappa<T<\alpha_{\kappa+1}$, or $T<\alpha_0$.
\item {\sl Simple resonance}: $T\in \partial \mathcal I$, i.e. $\exists \kappa\in\N$ such that $T=\alpha_\kappa$ or $T=\beta_\kappa$.
\item {\sl Double resonance}: $T\in \partial\widetilde{\mathcal I}\setminus \partial \mathcal I$, i.e. $\exists \kappa\in\N$ such that $T=\beta_\kappa =\alpha_{\kappa+1}$.
\end{itemize}

\medbreak

Introducing polar coordinates $z=(x,y)=\rho e^{i\vartheta}$, the angular velocity of a {\em never-zero} solution of~\eqref{system} is given by
\begin{multline*}
-\vartheta'(t)= \frac{\pscal{Jz'(t)}{z(t)}}{|z(t)|^2} \\
=(1-\gamma(t,z(t)) \frac{\pscal{\nabla V_1(z(t))}{z(t)}}{|z(t)|^2} + 
\gamma(t,z(t)) \frac{\pscal{\nabla V_2(z(t))}{z(t)}}{|z(t)|^2} \\
= 2(1-\gamma(t,z(t)) V_1(\cos\vartheta(t),\sin\vartheta(t)) +
2\gamma(t,z(t)) V_2(\cos\vartheta(t),\sin\vartheta(t)) 
\end{multline*}
so that we obtain
\begin{equation}\label{between}
0< 2V_1(\cos\vartheta(t),\sin\vartheta(t)) \leq -\vartheta'(t)\leq 2V_2(\cos\vartheta(t),\sin\vartheta(t))\,.
\end{equation}

By the previous computation, recalling~\eqref{period},~\eqref{starstar} and the notation introduced in~\eqref{semilines}, a never-zero solution of~\eqref{system}
\begin{eqnarray}
\text{moving from $l_S^1$ to $l_A^1$ spends a time } \Delta t_1\in[\tau_{1,V_2}\,,\,\tau_{1,V_1}]\,, \label{not1}\\
\text{moving from $l_A^1$ to $l_S^2$ spends a time } \Delta t_2\in[\sigma_{1,V_2}\,,\,\sigma_{1,V_1}]\,, \label{not2}\\
\text{moving from $l_S^2$ to $l_A^2$ spends a time } \Delta t_3\in[\tau_{2,V_2}\,,\,\tau_{2,V_1}]\,, \label{not3}\\
\text{moving from $l_A^2$ to $l_S^1$ spends a time } \Delta t_4\in[\sigma_{2,V_2}\,,\,\sigma_{2,V_1}]\,, \label{not4}\\
\text{completes a rotation around the origin in a time } \Delta t\in [\tau_{V_2}\,,\,\tau_{V_1}]\,. \label{not5}
\end{eqnarray}

\section{Main results}\label{mainresults}
\subsection{Nonresonance}\label{secnonres}

In this section we consider the  boundary value problem
\begin{equation}\label{problem+r}
\begin{cases}
J z' = g(t,z)+p(t,z)\,,\\
z(0)\in l_S\,, \qquad 
z(T)\in l_A\,,
\end{cases}
\end{equation}
where $g:[0,T]\times\R^2\to\R^2$ satisfies Assumption~\ref{ass1} and $p$ satisfies
\begin{assumption}\label{ass+p}
The function $p:[0,T]\times\R^2\to\R^2$ is a Carath\'eodory function such that, for every compact set $K\subset \R^2$, $|p(t,z)|\leq \ell_K(t)$, for a.e. $t\in[0,T]$ and $z\in K$, for a suitable $\ell_K \in L^2(0,T)$. Moreover,
\begin{equation}\label{sublinear}
\lim_{|z|\to \infty} \frac{p(t,z)}{z}=0\,,
\end{equation}
uniformly for almost every $t\in[0,T]$.
\end{assumption}

Let us introduce some notations. We will denote by $\Phi:\R\times \R^2 \to \R^2$ the flux of system
\begin{equation}\label{system+r}
J z' = g(t,z)+p(t,z)\,,
\end{equation}
i.e. $\Phi(\cdot ,z_0)$ is the solution $z$ of~\eqref{system+r} such that $z(0)=\Phi(0,z_0)=z_0$.\footnote{We can assume, without loss of generality, the uniqueness of the solutions to the Cauchy problems. Indeed, by standard arguments, all the results in this paper can be obtained with a limit procedure introducing a sequence of approximating nonlinearities having such a uniqueness property.}
We will also consider polar coordinates associated to never-zero solutions $z=\Phi(\cdot,z_0)$: 
\begin{equation}\label{fluxpolar}
\Phi(t,z_0) = \mathscr R(t,z_0) \big( \cos \Theta(t,z_0) \,,\, \sin \Theta(t,z_0) \big)\,.
\end{equation}
For definiteness, we will need to specify the value $\Theta(0,z_0)$ or we will preferably consider the {\sl covered-angle} function 
\begin{equation}\label{coveredangle}
\Delta\Theta(t,z_0)=\Theta(0,z_0)-\Theta(T,z_0)\geq 0\,.
\end{equation}

In this section we are going to prove the following result which generalizes~\cite[Theorem 3.1]{FoGar2013}:
\begin{theorem}[Nonresonance]\label{thm-nonres}
Consider problem~\eqref{problem+r}, where $g$ satisfies Assumption~\ref{ass1} and $p$ satisfies Assumption~\ref{ass+p}. If
$T \notin \mathcal I$, where $\mathcal I$ is the resonance set introduced in~\eqref{union}, then there exists at least one solution of~\eqref{problem+r}.
\end{theorem}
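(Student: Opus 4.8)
The plan is to use the shooting method on the starting line $l_S$, combined with a homotopy/degree argument. First I would parametrize the starting line by $s\in\R$ via $l_S(s)=s(\cos\zeta_S,\sin\zeta_S)$, and for each initial datum $z_0=l_S(s)$ consider the solution $\Phi(\cdot,z_0)$ of system~\eqref{system+r}. Since $p$ is sublinear (Assumption~\ref{ass+p}) and $g$ is positively homogeneous of degree $1$ and controlled between $\nabla V_1$ and $\nabla V_2$, a standard Gronwall-type estimate shows that solutions with large $|s|$ cannot hit the origin on $[0,T]$, so they are never-zero and polar coordinates are well defined. The key quantity is the covered-angle function $\Delta\Theta(T,z_0)$ from~\eqref{coveredangle}: the boundary condition $z(T)\in l_A$ is satisfied precisely when $\Delta\Theta(T,z_0)$ equals $\Delta\zeta + m\pi$ for some nonnegative integer $m$ (the solution must travel from $l_S$ to $l_A$, possibly winding around the origin several times).

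The heart of the proof is a two-sided rotation estimate. Using the angular inequality~\eqref{between} and the laps notation~\eqref{not1}--\eqref{not5}, I would show that for $|s|$ sufficiently large the covered angle $\Delta\Theta(T,z_0)$ is pinched: it is at least the angle a $V_2$-solution sweeps in time $T$ and at most the angle a $V_1$-solution sweeps in time $T$. The hypothesis $T\notin\mathcal I$ means $T$ lies strictly between $\beta_\kappa$ and $\alpha_{\kappa+1}$ (or below $\alpha_0$) for some $\kappa$; by the definitions of the $\alpha_j,\beta_j$ this is exactly the statement that there is a "gap" value of the form $\Delta\zeta+m\pi$ that is strictly avoided by the whole interval of possible covered angles — more precisely, for every large $|s|$ one has $\Delta\zeta + m_0\pi < \Delta\Theta(T,z_0) < \Delta\zeta + (m_0+1)\pi$ for a fixed $m_0$ independent of $s$, OR the covered angle is uniformly below $\Delta\zeta$. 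Translating the interval structure of $\mathcal I$ into this clean statement about which multiples of $\pi$ (shifted by $\Delta\zeta$) are hit is the main bookkeeping obstacle, and I would handle it by carefully matching the four cases~\eqref{resonancesetV1}--\eqref{resonancesetV4} with the four quantities $a_j^1,a_j^2,b_j^1,b_j^2$.

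With the rotation numbers controlled at infinity, I would run a degree/continuation argument. Consider the map $s\mapsto \Delta\Theta(T,l_S(s)) - (\Delta\zeta + m_0\pi)$ (choosing the target level $\Delta\zeta+m_0\pi$ that the nonresonance gap straddles, or noting that for $T<\alpha_0$ no solution can even reach $l_A$ unless $s=0$, handled separately — actually the relevant target is the level forced by continuity from $s=0$, where $\Delta\Theta=0$). As $|s|\to\infty$ the covered angle approaches, from strictly below, the level $\Delta\zeta+(m_0+1)\pi$ and stays strictly above $\Delta\zeta+m_0\pi$; meanwhile by continuity of the flow and the fact that $\Delta\Theta(T,0^+)$ can be computed from the linear-at-the-origin behavior... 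Here instead I would rather homotope the full nonlinearity to the autonomous problem $Jz'=\nabla V_1(z)$ (or a convex combination), using that along the homotopy $T$ stays in the nonresonance region so no nontrivial solution of the boundary value problem is ever created with norm blowing up; the a priori bound on solutions (again from sublinearity of $p$) makes the Leray–Schauder degree on a large ball well-defined and homotopy-invariant, and the autonomous non-resonant problem has nonzero degree (it has only the trivial solution but the linearized index is nonzero, or one counts via the rotation number). Concluding existence of a nontrivial solution then follows. The main obstacle, as noted, is the careful combinatorial translation between the interval endpoints $\alpha_j,\beta_j$ and the admissible covered angles; the analytic estimates are routine Gronwall and comparison arguments built on~\eqref{between}.
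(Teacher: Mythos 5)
Your setup matches the paper's: shooting from $l_S$, Gronwall/elastic estimates guaranteeing that large solutions stay never-zero, the two-sided pinching of the covered angle via~\eqref{between} and~\eqref{not1}--\eqref{not5}, and the translation of $T\notin\mathcal I$ into the statement that, for $|s|$ large, $\Delta\Theta(T,l_S(s))$ lies strictly between $\Delta\zeta+(j-1)\pi$ and $\Delta\zeta+j\pi$ for a fixed $j$. Up to that point (essentially the content of Proposition~\ref{propnonres} and Remark~\ref{remnonres}) your plan is sound and is exactly what the paper does.

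The gap is in how you close the argument. Your first idea, following the level set of $s\mapsto\Delta\Theta(T,l_S(s))-(\Delta\zeta+m_0\pi)$ by continuity ``from $s=0$'', does not work as stated: the rotation estimates are only available for $|s|$ large, nothing controls $\Delta\Theta$ (or even guarantees that the solution is never-zero) for small $|s|$, and your parenthetical claim that for $T<\alpha_0$ no solution reaches $l_A$ unless $s=0$ is false for small data. Your fallback, a Leray--Schauder homotopy to $Jz'=\nabla V_1(z)$, leaves the decisive step unproved: the assertion that the autonomous nonresonant Sturm--Liouville problem has nonzero degree cannot be obtained by linearization (the system is only positively homogeneous, not linear), and ``counting via the rotation number'' is precisely the argument you would still have to supply. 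The paper avoids all of this with an elementary connectedness argument: take the two antipodal points $z^1=R(\cos\zeta_S,\sin\zeta_S)\in l_S^1$ and $z^2=-R(\cos\zeta_S,\sin\zeta_S)\in l_S^2$ with $R$ large; since both covered angles lie in the same open interval $\big((j-1)\pi+\Delta\zeta,\,j\pi+\Delta\zeta\big)$ while the starting angles differ by $\pi$, the images $\Phi(T,z^1)$ and $\Phi(T,z^2)$ land in \emph{different} connected components of $\R^2\setminus l_A$; the continuous curve $\eta(\sigma)=\Phi\big(T,\sigma(\cos\zeta_S,\sin\zeta_S)\big)$, $\sigma\in[-R,R]$, must therefore meet $l_A$, and any such crossing is the desired solution. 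No information about small $|s|$ and no degree computation is needed. You should replace your concluding step with this two-endpoint intermediate-value argument (or, if you insist on the degree route, actually compute the degree of the autonomous problem, which amounts to the same rotation-number analysis).
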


\begin{proof}
If $T\notin\mathcal I$, then we have $T\in(\beta_{j-1},\alpha_j)$ for a certain $j\in\N$ (set $\beta_{-1}=0$). In particular we can find a small $\epsilon>0$ such that 
\begin{equation}\label{inthemiddle}
T\in\big(\beta_{j-1}+(2j-1)\epsilon,\alpha_j-(2j+1)\epsilon\big)\,.
\end{equation}
Introducing polar coordinates $z=\rho e^{i\vartheta}$, the angular velocity of solutions of~\eqref{system+r}
is given by 
\begin{equation*}
-\vartheta'(t)= \frac{\pscal{g(t,z(t))+p(t,z(t))}{z(t)}}{|z(t)|^2} 
\end{equation*}
and we can compute
\begin{equation}\label{between+r}
2V_1(\cos\vartheta(t),\sin\vartheta(t))+ e(t) \leq -\vartheta'(t)\leq 2V_2(\cos\vartheta(t),\sin\vartheta(t)) + e(t) \,.
\end{equation}
where
$$
e(t)=  \frac{\pscal{p(t,z(t))}{z(t)}}{|z(t)|^2} \,. 
$$

Then, from~\eqref{not1}--\eqref{not5}, for every $\epsilon>0$ we can find $R_1>0$ such that a solution of~\eqref{system+r}, satisfying $|z(t)|\geq R_1$ for every $t\in[0,T]$, rotates clockwise and
\begin{eqnarray}
\text{moving from $l_S^1$ to $l_A^1$ spends a time } \Delta t_1\in(\tau_{1,V_2}-\epsilon\,,\,\tau_{1,V_1}+\epsilon)\,,\qquad \label{not1p}\\
\text{moving from $l_A^1$ to $l_S^2$ spends a time } \Delta t_2\in(\sigma_{1,V_2}-\epsilon\,,\,\sigma_{1,V_1}+\epsilon)\,,\qquad \label{not2p}\\
\text{moving from $l_S^2$ to $l_A^2$ spends a time } \Delta t_3\in(\tau_{2,V_2}-\epsilon\,,\,\tau_{2,V_1}+\epsilon)\,,\qquad \label{not3p}\\
\text{moving from $l_A^2$ to $l_S^1$ spends a time } \Delta t_4\in(\sigma_{2,V_2}-\epsilon\,,\,\sigma_{2,V_1}+\epsilon)\,,\qquad \label{not4p}\\
\text{completes a rotation around the origin in a time } \hspace{30mm} \nonumber \\
\Delta t\in (\tau_{V_2}-4\epsilon\,,\,\tau_{V_1}+4\epsilon)\,.\qquad \label{not5p}
\end{eqnarray}
Then, by the elastic property we can prove the existence of $R_2>R_1$ such that if a solution of~\eqref{system+r} satisfies $|z(\bar t)|\geq R_2$ for a certain $\bar t\in[0,T]$, then $|z(t)|\geq R_1$ for every $t\in[0,T]$.
E.g., we can adopt a guiding curve method as in~\cite{FS2,FSbox,S5}. Lemma 4.2 in~\cite{FS2} works well in our situation.

Summing up, we can conclude with the following result.

\begin{proposition}\label{propnonres}
For every $j\in\N$ and every $\epsilon>0$, there exists $R_{j,\epsilon}>R_2$ such that
\begin{itemize}
\item if $z^1\in l_S^1$ and $|z^1|\geq R_{j,\epsilon}$ then the solution $\Phi(\cdot,z^1)$ covers the angle $j\pi + \Delta\zeta$ in a time $\tau\in(a_j^1-(2j+1)\epsilon,b_j^1+(2j+1)\epsilon)$.
\item if $z^2\in l_S^2$ and $|z^2|\geq R_{j,\epsilon}$ then the solution $\Phi(\cdot,z^2)$ covers the angle $j\pi + \Delta\zeta$ in a time $\tau\in(a_j^2-(2j+1)\epsilon,b_j^2+(2j+1)\epsilon)$.
\end{itemize}
\end{proposition}

As a consequence we can rewrite the previous proposition as follows, cf. Figure~\ref{fig:eta} (set $R_{-1,\epsilon}=0$ for definiteness).

\begin{remark}\label{remnonres}
Fix $R>\max\{R_{j-1,\epsilon},R_{j,\epsilon}\}$. Setting $z^1=R(\cos \zeta_S,\sin \zeta_S)\in l^1_S$ and $z^2=-R(\cos \zeta_S,\sin \zeta_S)\in l^2_S$. Then
$$
\Delta\Theta(T,z^1) \in \big((j-1)\pi+\Delta\zeta\,,\, j\pi+\Delta\zeta\big)\,.
$$
$$
\Delta\Theta(T,z^2) \in \big((j-1)\pi+\Delta\zeta\,,\, j\pi+\Delta\zeta\big)\,.
$$
In particular, $\Phi(T,z^1)$ and $\Phi(T,z^2)$ belongs to different connected components of $\R^2\setminus l_A$.
\end{remark}

By standard argument (cf. Figure~\ref{fig:eta}) the curve $\eta:[-R,R]\to\R^2$ defined as $\eta(\sigma)=\Phi\big(T,\sigma(\cos \zeta_S,\sin \zeta_S) \big)$ intersects the line $l_A$ for a certain $\bar \sigma$ and the proof of Theorem~\ref{thm-nonres} is given.
\end{proof}

\begin{figure}[t]

\centerline{\epsfig{file=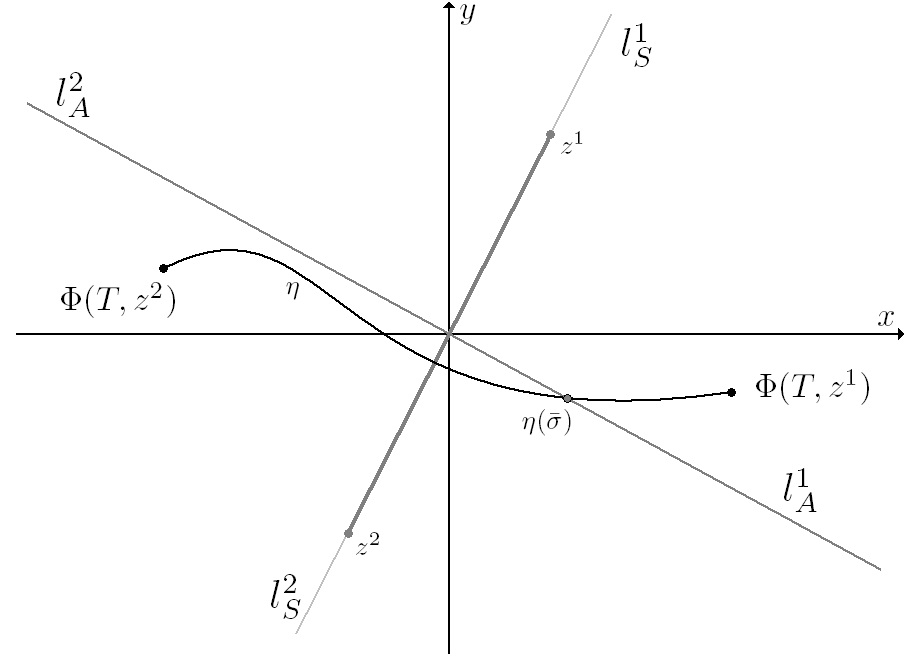, width = 10 cm}}
\caption{A sketch on Remark~\ref{remnonres} and an idea of the proof of Theorem~\ref{thm-nonres}.
}
\label{fig:eta}
\end{figure}

\subsection{Simple resonance}\label{secsimres}

In this section we consider the case $T\in \partial \mathcal I$, where $\mathcal I$ was introduced in~\eqref{union}. In such a situation $\beta_{j-1}< T= \alpha_j$ or $\beta_{j-1}=T<\alpha_j$ for a certain $j\in\N$. 
In order to obtain the existence of a solution to~\eqref{problem+r} we need to introduce a Landesman-Lazer type of condition.
We can distinguish the following cases:
\begin{equation}
T=\alpha_j = a_j^1 < a_j^2\,, \label{case1}\tag{R1} 
\end{equation}
\begin{equation}
T=\alpha_j = a_j^2 < a_j^1\,, \label{case2}\tag{R2} 
\end{equation}
\begin{equation}
T=\alpha_j = a_j^1 = a_j^2\,; \label{case3}\tag{R3} 
\end{equation}
\begin{equation}
T=\beta_{j-1} = b_{j-1}^1 > b_{j-1}^2\,,  \label{case4}\tag{R4} 
\end{equation}
\begin{equation}
T=\beta_{j-1} = b_{j-1}^2 > b_{j-1}^1\,,  \label{case5}\tag{R5} 
\end{equation}
\begin{equation}
T=\beta_{j-1} = b_{j-1}^1 = b_{j-1}^2\,.  \label{case6}\tag{R6}
\end{equation}
%
%

Let us introduce the following functions, where $G(t,z)=g(t,z)+p(t,z)$:
\begin{multline}\label{LLliminf}
\mathcal J^-(\theta) = \int_0^T \liminf_{(\lambda,\omega)\to(+\infty,\theta)} \big[ \pscal{G(t,\lambda\varphi_{V_1}(t+\omega))}{\varphi_{V_1}(t+\omega)} -  \\
 - 2\lambda  V_1(\varphi_{V_1}(t+\omega))\big]\, dt\,,
\end{multline}
\begin{multline}\label{LLlimsup}
\mathcal J^+(\theta) = \int_0^T \limsup_{(\lambda,\omega)\to(+\infty,\theta)} \big[
 \pscal{G(t,\lambda\varphi_{V_2}(t+\omega))}{\varphi_{V_2}(t+\omega)} -  \\
 - 2\lambda V_2(\varphi_{V_2}(t+\omega)) 
\big]\, dt\,.
\end{multline}

The Landesman-Lazer type of assumption we need to require can be summarized as follows.

\begin{assumption}[Landesman-Lazer type of assumptions]\label{assLL}
\begin{eqnarray*}
& \text{If~\eqref{case1} holds, assume} & \mathcal J^+(\tau_{0,V_2})<0\,, \\
& \text{if~\eqref{case2} holds, assume} & \mathcal J^+(\tau_{0,V_2}+\tau_{1,V_2}+\sigma_{1,V_2})<0\,, \\
& \text{if~\eqref{case3} holds, assume} & \mathcal J^+(\tau_{0,V_2})<0 \text{ and }  \\
                                    && \mathcal J^+(\tau_{0,V_2}+\tau_{1,V_2}+\sigma_{1,V_2})<0\,;\\
& \text{if~\eqref{case4} holds, assume} & \mathcal J^-(\tau_{0,V_1})>0\,, \\
& \text{if~\eqref{case5} holds, assume} & \mathcal J^-(\tau_{0,V_1}+\tau_{1,V_1}+\sigma_{1,V_1})>0\,, \\
& \text{if~\eqref{case6} holds, assume} & \mathcal J^-(\tau_{0,V_1})>0 \text{ and }  \\
                                    && \mathcal J^-(\tau_{0,V_1}+\tau_{1,V_1}+\sigma_{1,V_1})>0\,. \\
\end{eqnarray*}
\end{assumption}

In the case of {\sl simple resonance} only one of the alternatives~\eqref{case1}-\eqref{case6} is verified, and so we need to check only one of the alternatives in Assumption~\ref{assLL}.

\begin{remark}[A comparison with~\cite{BosGar}]
In~\cite{BosGar} the case $V_1=V_2$ has been studied. In such a situation $a_j^i=b_j^i$ for every $j\in\N$ and $i\in\{1,2\}$. In particular  Theorem~2.1 and Theorem~2.3-2 in~\cite{BosGar} treats the case ``$j$ is  even'', while Theorem~2.3-1 and Theorem~2.3-3 the case ``$j$ is odd''.

E.g., in~\cite[Theorem 2.1]{BosGar} the situations (17),(18),(19) or (20) correspond respectively to~\eqref{case1}, \eqref{case4}, \eqref{case3} and~\eqref{case6}.
\end{remark}

The existence result is the following.

\begin{theorem}[Simple resonance]\label{thmLL}
Consider problem~\eqref{problem+r}, where $p$ satisfies Assumption~\ref{ass+p} and $g$ satisfies Assumption~\ref{ass1}. If
$T\in \partial \mathcal I$, where $\mathcal I$ was introduced in~\eqref{union}, then there exists at least one solution of~\eqref{problem+r} provided that Landesman-Lazer Assumption~\ref{assLL} is fulfilled.
\end{theorem}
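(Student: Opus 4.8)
The plan is to use the shooting method along the line $l_S$, exactly as in the proof of Theorem~\ref{thm-nonres}, but now carefully tracking what happens at the two endpoints of the shooting segment, where the resonance forces a borderline behavior of the covered angle. Since $T\in\partial\mathcal I$, suppose for concreteness that $\beta_{j-1}<T=\alpha_j$ and that case~\eqref{case1} holds, i.e.\ $T=\alpha_j=a_j^1<a_j^2$; the other five cases are analogous with obvious modifications (replacing $l_S^1$ by $l_S^2$, $V_2$ by $V_1$, and reversing the relevant inequalities). First I would fix $R>0$ large and consider the curve $\eta(\sigma)=\Phi(T,\sigma(\cos\zeta_S,\sin\zeta_S))$, $\sigma\in[-R,R]$, with $z^1=R(\cos\zeta_S,\sin\zeta_S)\in l_S^1$ and $z^2=-R(\cos\zeta_S,\sin\zeta_S)\in l_S^2$. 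By the analog of Proposition~\ref{propnonres} (taking $\epsilon$ small enough that $T<\alpha_j-(2j+1)\epsilon$ fails only at the $a_j^1$ endpoint, i.e.\ $b_{j-1}^1+(2j-1)\epsilon<T$ and $a_j^2-(2j+1)\epsilon>T$ still hold), the shooting endpoint $z^2$ starting from $l_S^2$ still produces $\Delta\Theta(T,z^2)\in((j-1)\pi+\Delta\zeta,\,j\pi+\Delta\zeta)$ for $|z^2|$ large, so $\Phi(T,z^2)$ lies strictly on one side of $l_A$, just as in the nonresonant case. The difficulty is entirely at the endpoint $z^1\in l_S^1$, where $T=a_j^1$ means the solution of the autonomous comparison system $Jz'=\nabla V_2(z)$ arrives \emph{exactly} on $l_A$ after covering the angle $j\pi+\Delta\zeta$: the covered angle $\Delta\Theta(T,z^1)$ could in principle approach $j\pi+\Delta\zeta$ from below, and we must rule out that $\Phi(T,z^1)$ ends up on the \emph{wrong} side (the same side as $\Phi(T,z^2)$), which would break the intermediate value argument.

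The heart of the proof is therefore an asymptotic estimate showing that, for $|z^1|$ large, $\Delta\Theta(T,z^1)>j\pi+\Delta\zeta$, i.e.\ the solution shot from $l_S^1$ \emph{overshoots} $l_A$ and lands on the correct side; this is exactly where the Landesman--Lazer condition $\mathcal J^+(\tau_{0,V_2})<0$ enters. The mechanism is the standard one: along a solution $z(t)=\mathscr R(t,z^1)(\cos\Theta,\sin\Theta)$ with large amplitude, one compares the actual angular velocity $-\vartheta'$ with the model velocity $2V_2(\cos\vartheta,\sin\vartheta)$; the defect is governed by the quantity $\pscal{G(t,z(t))}{z(t)}/|z(t)|^2-2V_2(\cos\vartheta,\sin\vartheta)$. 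Integrating the angular equation and using that, as $|z^1|\to\infty$, the rescaled solution $z(t)/\mathscr R(0,z^1)$ converges uniformly on $[0,T]$ to the autonomous arc $\varphi_{V_2}(t+\tau_{0,V_2})$ (by continuous dependence for the homogeneous limiting system together with the sublinearity of $p$ from Assumption~\ref{ass+p}), I would show that the total angular deficit from the model, in the limit, is controlled from the right sign by $\mathcal J^+(\tau_{0,V_2})$; since this is strictly negative, the actual solution is slightly \emph{faster} in the angle than the critically-tuned model, hence it strictly overshoots, giving $\Delta\Theta(T,z^1)>j\pi+\Delta\zeta$ for $R$ large. This is the step I expect to be the main obstacle: one must handle the possible non-uniqueness of the limiting arc's ``landing time'' and the $\liminf/\limsup$ structure of $\mathcal J^\pm$ carefully (the $(\lambda,\omega)\to(+\infty,\theta)$ double limit is designed precisely to absorb the fact that the phase $\omega$ of the rescaled solution at time $0$ is only known to converge to $\tau_{0,V_2}$, not to equal it), and one must also ensure the amplitude stays bounded away from zero on $[0,T]$ — which again follows from the elastic/guiding-curve property already invoked for Theorem~\ref{thm-nonres}.

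Once this one-sided overshoot estimate is established, the conclusion is immediate and purely topological: for $R$ sufficiently large, $\Phi(T,z^1)$ and $\Phi(T,z^2)$ lie in the two different connected components of $\R^2\setminus l_A$ (for $z^1$ because of the overshoot estimate, for $z^2$ because of the nonresonant-type bound at the lower endpoint), so the continuous curve $\eta:[-R,R]\to\R^2$, $\eta(\sigma)=\Phi(T,\sigma(\cos\zeta_S,\sin\zeta_S))$, must cross $l_A$ at some $\bar\sigma$; the corresponding solution $\Phi(\cdot,\bar\sigma(\cos\zeta_S,\sin\zeta_S))$ satisfies $z(0)\in l_S$ and $z(T)\in l_A$, which is the desired solution of~\eqref{problem+r}. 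For the remaining cases~\eqref{case2}--\eqref{case6} I would repeat the same scheme: in~\eqref{case4}--\eqref{case6} the critical endpoint is $T=\beta_{j-1}=b_{j-1}^i$ and one needs an \emph{undershoot} estimate $\Delta\Theta(T,\cdot)<(j-1)\pi+\Delta\zeta$ driven by $\mathcal J^-(\cdot)>0$ (the solution is slightly slower in the angle than the model tuned to $V_1$), while in~\eqref{case3} and~\eqref{case6} both shooting endpoints are critical and one applies the overshoot (resp.\ undershoot) estimate at each, using both Landesman--Lazer inequalities.
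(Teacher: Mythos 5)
Your overall architecture (shooting along $l_S$, rescaling large solutions, letting the Landesman--Lazer condition control the borderline angle, then an intermediate value argument on $\eta$) is the same as the paper's, but the key quantitative claim is reversed, and with your version the topological argument does not close. In case~\eqref{case1} one has $-\vartheta'(t)\leq 2V_2(\cos\vartheta,\sin\vartheta)+e(t)$ with $e$ small at large amplitude, and $T=a_j^1$ is exactly the time the pure $V_2$-flow needs to sweep $j\pi+\Delta\zeta$ from $l_S^1$; the condition $\mathcal J^+(\tau_{0,V_2})<0$ says that the average angular defect $\pscal{G-\nabla V_2}{z}/|z|^2$ is \emph{negative} near the resonant solution, i.e.\ the rotation is \emph{slower} than the critically tuned model. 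The paper's proof accordingly shows (by contradiction: a diverging sequence with $\Delta\Theta(T,z_n^0)\in[j\pi+\Delta\zeta,(j+1)\pi+\Delta\zeta)$ forces $\mathcal X_n\geq0$ and in the limit $\mathcal J^+(\tau_{0,V_2})\geq0$) that $\Delta\Theta(T,z^1)<j\pi+\Delta\zeta$, so that $\Delta\Theta(T,z^1)$ lies in the \emph{same} interval $((j-1)\pi+\Delta\zeta,\,j\pi+\Delta\zeta)$ as $\Delta\Theta(T,z^2)$; since $z^1$ and $z^2$ start from antipodal rays, this is precisely what places $\Phi(T,z^1)$ and $\Phi(T,z^2)$ in \emph{different} components of $\R^2\setminus l_A$ (Remark~\ref{remnonres}). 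Your claimed ``overshoot'' $\Delta\Theta(T,z^1)>j\pi+\Delta\zeta$ would instead put $\Phi(T,z^1)$ in the interval $(j\pi+\Delta\zeta,(j+1)\pi+\Delta\zeta)$, which — two slots away from $z^2$'s interval, hence the same half-plane — lands both endpoints on the \emph{same} side of $l_A$ and destroys the intermediate value argument. You also misidentify the dangerous scenario: approaching $j\pi+\Delta\zeta$ from below is harmless; it is reaching or exceeding it that must be excluded. The same reversal recurs in your treatment of~\eqref{case4}--\eqref{case6}, where the paper must exclude $\Delta\Theta(T,z_0)\leq(j-1)\pi+\Delta\zeta$ (i.e.\ prove the angle exceeds $(j-1)\pi+\Delta\zeta$), not prove an ``undershoot'' below it.

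A secondary gap: you assert that the rescaled solutions converge to the arc $C\varphi_{V_2}(t+\tau_{0,V_2})$ ``by continuous dependence for the homogeneous limiting system''. This is not automatic, because the weak limit only solves $Jw'=(1-\Gamma)\nabla V_1(w)+\Gamma\nabla V_2(w)$ for some unknown weight $\Gamma\in[0,1]$; identifying $\Gamma\equiv1$ (hence the limit arc) is a nontrivial step in the paper, and it uses precisely the contradiction hypothesis on the covered angle together with $T=a_j^1$ (via the equality case in~\eqref{intzero} and the radial equation~\eqref{radV2}). A direct, non-contradiction version of your argument would have to supply this identification by some other means.
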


\begin{proof}
Let us start assuming the validity of~\eqref{case1}, 
in particular $\beta_{j-1}<T=\alpha_j$.

Let us consider $z^2\in l_S^2$ and the solution $\Phi(\cdot,z^2)$ of~\eqref{system+r}. By Proposition~\ref{propnonres}, if $|z^2|$ is sufficiently large
then 
\begin{equation}\label{good1}
\Delta\Theta(T,z^2)\in \big((j-1)\pi + \Delta\zeta\,,\, j\pi + \Delta\zeta \big)\,.
\end{equation}
In fact, $b_{j-1}^2<T<a_j^2$ holds.

Let us now consider $z^1\in l_S^1$ and the solution $\Phi(\cdot,z^1)$ of~\eqref{system+r}. Arguing similarly, if $|z^1|$ is sufficiently large then 
\begin{equation}\label{bad1}
\Delta\Theta(T,z^1)\in \big((j-1)\pi + \Delta\zeta\,,\, (j+1)\pi + \Delta\zeta \big)\,.
\end{equation}
Notice that the interval is larger since $T=a_j^1$. We need to prove that the situation $\Delta\Theta(T,z^1)\in \big[j\pi + \Delta\zeta\,,\, (j+1)\pi + \Delta\zeta \big)$ is forbidden. Once this claim is proved, we will obtain the existence of $R>0$ with the property explained in Remark~\ref{remnonres}, thus permitting us to conclude the proof of the theorem as in the previous section.

We argue by contradiction and suppose the existence of a sequence $(z_n^0)_n\subset l_S^1$, with $|z_n^0|\to \infty$, such that
\begin{equation}\label{absurd}
\Delta\Theta(T,z_n^0)\in \big[j\pi + \Delta\zeta\,, (j+1)\pi + \Delta\zeta \big)\,.
\end{equation}
Set $z_n(t)=\Phi(t,z_n^0)$ and introduce the sequence
\begin{equation}\label{whoiswn}
w_n= \frac{z_n}{\|z_n\|_\infty}=\frac{\Phi(\cdot,z_n^0)}{\|\Phi(\cdot,z_n^0)\|_\infty}
\end{equation}
consisting of solutions to
\begin{equation}\label{seq}
\begin{cases}
\ds J w_n ' = (1-\Gamma_n(t)) \nabla V_1(w_n) + \Gamma_n(t) \nabla V_2(w_n)
+\frac{p(t,w_n(t)\|z_n\|_\infty)}{\|z_n\|_\infty}\,,\\
w_n(0)\in l_S^1\,,
\end{cases}
\end{equation}
where $\Gamma_n(t)=\gamma(t,w_n(t)\|z_n\|_\infty)$. Hence, $(w_n)_n$ is bounded in $H^1(0,T)$, so that up to a subsequence it converges uniformly and weakly in $H^1(0,T)$ to a certain non-trivial function $w$ satisfying $w(0)\in l_S^1$. Moreover, the sequence $(\Gamma_n)_n$ is bounded in $L^2$ and converges weakly to a certain $\Gamma\in L^2(0,T)$, up to a subsequence. The sequence is contained 
in the closed convex subset $\{ q\in L^2(0,T) \mid 0\leq q(t)\leq 1 \text{ a.e. in } [0,T] \}$ so that we have also $0\leq \Gamma \leq 1$ a.e. in $[0,T]$. Then, passing to the weak limit in~\eqref{seq} we get
\begin{equation}\label{seqlimit}
\begin{cases}
J w' = (1-\Gamma(t)) \nabla V_1(w) + \Gamma(t) \nabla V_2(w)\,,\\
w(0)\in l_S^1\,.
\end{cases}
\end{equation}

\medbreak

We claim that
$\Gamma\equiv 1$ a.e. in $[0,T]$.

\medbreak

From~\eqref{absurd}, setting $w_n^0=w_n(0)=\frac{z_n^0}{\|z_n\|_\infty}$ and using polar coordinates $w_n=\varrho_n e^{i\theta_n}$,

we get $\theta_n(0)-\theta_n(T)
\in \big[j\pi + \Delta\zeta\,,\, (j+1)\pi + \Delta\zeta \big)$. Hence, setting 
$w=\varrho e^{i\theta}$,
\begin{equation}\label{anglelim}
\theta(0)-\theta(T)
\in \big[j\pi + \Delta\zeta\,,\, (j+1)\pi + \Delta\zeta \big]\,,
\end{equation}
so that there exists $\bar t \in [0,T]$ such that $\theta(0)-\theta(\bar t) = j\pi + \Delta\zeta$.
Hence, integrating
$$
- \frac{\theta'(t)}{2V_2(\cos\theta(t),\sin\theta(t))} \leq 1 
$$
in the interval $[0,\bar t]$ we get
\begin{equation}\label{int01}
\int^{\zeta_S}_{\zeta_S-(j\pi + \Delta\zeta)} \frac{d\theta}{2V_2(\cos\theta,\sin\theta)}=a_j^1 \leq \bar t 
\end{equation}
By hypothesis $a_j^1=T$, hence $\bar t = T$ holds.

We have proved that $w(T)\in l_A$ and $w$ cover the angle $j\pi+\Delta\zeta$ in the interval $[0,T]$.

Let us now parametrize $w$ using the polar coordinates induced by $\varphi_{V_2}$: 
\begin{equation}\label{polmodV2}
w(t)=r(t)\varphi_{V_2}(t+\omega(t))	\,.
\end{equation}
A standard computation provide
\begin{equation}\label{radV2}
r'(t) = -r(t) (1-\Gamma(t)) \pscal{\nabla V_1(\varphi_{V_2}(t+\omega(t)))}{\varphi_{V_2}'(t+\omega(t))}\,,
\end{equation}
\begin{equation}\label{angV2}
\omega'(t)=(1-\Gamma(t))(2V_1(\varphi_{V_2}(t+\omega(t)))-1)\,.
\end{equation}
Notice that $\omega(0)=\tau_{0,V_2}$ and $\omega(T)=\tau_{0,V_2}+a_j^1-T=\tau_{0,V_2}$, so that
\begin{equation}\label{intzero}
0 = \int_0^T \omega'(t) \,dt = \int_0^T (1-\Gamma(t))(2V_1(\varphi_{V_2}(t+\omega(t)))-1)\,dt\,.
\end{equation}
Recalling that $V_1\leq V_2$, we get $2V_1(\varphi_{V_2})-1 \leq 2V_2(\varphi_{V_2})-1 = 0$ so that
$$
(1-\Gamma(t))(2V_1(\varphi_{V_2}(t+\omega(t)))-1) \leq 0 \qquad \text{a.e. in } [0,T]\,.
$$
Hence, from~\eqref{intzero}, we necessarily have
\begin{equation}\label{itiszero}
(1-\Gamma(t))(2V_1(\varphi_{V_2}(t+\omega(t)))-1) = 0 \qquad \text{a.e. in } [0,T]\,,
\end{equation}
in particular $\omega\equiv \tau_{0,V_2}$ a.e. in $[0,T]$.

Let us now focus our attention on the radial velocity formula~\eqref{radV2}. 
We are going to prove that $r'\equiv0$ almost everywhere in $[0,T]$.
Let us consider $t_0\in[0,T]$ such that $\Gamma(t_0)<1$ (the situation is trivial if $\Gamma(t_0)=1$).
By~\eqref{itiszero} we necessarily have $2V_1(\varphi_{V_2}(t_0+\omega(t_0)))=1$. Recalling that $V_2\geq V_1$ and $2V_2(\varphi_{V_2})\equiv 1$, we find that $t_0$ is a minimum of the function $\mathcal V(t)= V_2(\varphi_{V_2}(t+\omega(t)))-V_1(\varphi_{V_2}(t+\omega(t)))$, precisely $\mathcal V(t_0)=0$, so that $\mathcal V'(t_0)=0$. Being $V_2$ constant along $\varphi_{V_2}$ we get $\left.\frac{d}{dt}V_1(\varphi_{V_2}(\cdot+\omega))\right|_{t=t_0}=0$ and consequently
$\pscal{\nabla V_1(\varphi_{V_2}(t_0+\omega(t_0)))}{\varphi_{V_2}'(t_0+\omega(t_0))} =0$, giving $r'(t_0)=0$.

\medbreak

We have proved that
$w= C \varphi_{V_2}(t+\tau_{0,V_2})$ for a certain constant $C>0$,
so that $\Gamma\equiv 1$ a.e. in $[0,T]$.

\medbreak

Let us consider again the sequence $(w_n)_n$ introduced in~\eqref{whoiswn} and the polar coordinates $w_n = \varrho_n e^{i\theta_n}$. We have
\begin{equation}\label{geq}
\int_{\theta_n(T)}^{\theta_n(0)} \frac{d\theta}{2V_2(\cos\theta,\sin\theta)} \geq T  \,,
\end{equation}
for large indexes $n$
by the validity of~\eqref{absurd}:
indeed, the integral provides the time spent by a solution of the system $Jz'=\nabla V_2(z)$ to cover the angular sector between $\theta_n(0)=\zeta_S$ and $\theta_n(T)\leq \zeta_S-(\pi j +\Delta\zeta)$, while $T$ is the time spent to cover the (not larger) angular sector between $\zeta_S$ and $\zeta_S-(\pi j +\Delta\zeta)$.

The angular speed of $w_n$ is given by
\begin{eqnarray*}
-\theta_n'(t) &=& \frac{\pscal{Jw_n'(t)}{w_n(t)}}{|w_n(t)|^2} = \frac{\pscal{Jz_n'(t)}{z_n(t)}}{|z_n(t)|^2}\\
&=& 2V_2(\cos\theta_n(t),\sin\theta_n(t)) \\
&& \hspace{20mm}+ \frac{\pscal{g(t,z_n(t))+p(t,z_n(t)) - \nabla V_2(z_n(t))}{z_n(t)}}{|z_n(t)|^2}\,,
\end{eqnarray*}
thus giving
$$
\int_{\theta_n(T)}^{\theta_n(0)} \frac{d\theta}{2V_2(\cos\theta,\sin\theta)} = T + \int_0^T \frac{\pscal{\mathcal G(t,z_n(t))}{z_n(t)}}{2V_2(z_n(t))}\,dt\,,
$$
where $\mathcal G(t,z)=g(t,z)+p(t,z)-\nabla V_2(z)$. By~\eqref{geq}, we obtain
$$
\mathcal X_n := \int_0^T \frac{\pscal{\mathcal G(t,z_n(t))}{z_n(t)}}{2V_2(z_n(t))} \geq 0
$$
for sufficiently large indexes $n$. We parametrize the solutions $z_n$ in the polar coordinates induced by $\varphi_{V_2}$, i.e. we set
$$
z_n(t)= r_n(t) \varphi_{V_2}(t+\omega_n(t))\,.
$$
So, we obtain 
$$
\mathcal X_n = \int_0^T \frac{\pscal{\mathcal G\big(t,r_n(t)\varphi_{V_2}(t+\omega_n(t))\big)}{\varphi_{V_2}(t+\omega_n(t))}}{r_n(t)} 
$$
From ~\eqref{whoiswn} and recalling that $w_n \to w=C\varphi_{V_2}(t+\tau_{0,V_2})$ uniformly, we have $\frac{r_n(t)}{\|z_n\|_\infty} \to C$ and $\omega_n\to \tau_{0,V_2}$. Then
\begin{eqnarray*}
0&\leq& \limsup_{n\to+\infty} \|z_n\|_{\infty} \mathcal X_n\\
&\leq& \int_0^T \limsup_{n\to+\infty} \frac{\pscal{\mathcal G(t,r_n(t)\varphi_{V_2}(t+\omega_n(t)))}{\varphi_{V_2}(t+\omega_n(t))}}{\frac{r_n(t)}{\|z_n\|_\infty}} \, dt\\
&\leq& \frac1C \int_0^T \limsup_{(\lambda,\omega)\to(+\infty,\tau_{0,V_2})} \pscal{\mathcal G(t,\lambda\varphi_{V_2}(t+\omega))}{\varphi_{V_2}(t+\omega)} \, dt
\end{eqnarray*}
The last inequality, using~\eqref{LLlimsup}, can be rewritten as $\mathcal J^+(\tau_{0,V_2}) \geq 0$ which contradicts Assumption~\ref{assLL}.

\medbreak

We have proved the theorem if~\eqref{case1} holds.

\medbreak

Let us now spend few words in order to explain how to adapt the proof in the other situations.

\medbreak

Assume~\eqref{case2}. In such a situation the role of $l_S^1$ and $l_S^2$ is switched.
So, the ``good estimate'' in~\eqref{good1} is easily obtained for $z^1\in l_S^1$, with $|z^1|$ sufficiently large, while the ``bad estimate''~\eqref{bad1} occurs treating $z^2\in l_S^2$. In this case, we assume by contradiction
the existence of a sequence $(z_n^0)_n\subset l_S^2$, with $|z_n^0|\to \infty$, satisfying~\eqref{absurd}.
The proof can be plainly adapted, but we underline the main differences: the starting angle $\Theta(0,z_n^0)=\zeta_S$ is replaced by $\Theta(0,z_n^0)=\zeta_S+\pi$ and the constant $a_j^1$ is replaced by $a_j^2$. In particular,~\eqref{int01} becomes
$$
\int^{\zeta_S+\pi}_{\zeta_S+\pi-(j\pi + \Delta\zeta)} \frac{d\theta}{2V_2(\cos\theta,\sin\theta)}=a_j^2 \leq \bar t \,.
$$
Finally, the limit function is now $w(t)=C\varphi_{V_2}(t+\tau_{0,V_2}+\tau_{1,V_2}+\sigma_{1,V_2})$.

\medbreak

Assume now~\eqref{case3}. Under this hypothesis, we get the ``bad estimates''~\eqref{bad1} both for $z_0\in l_S^1$ and $z_0\in l_S^2$ with $|z_0|$ large so that the proof is a {\sl glueing} of cases~\eqref{case1} and~\eqref{case2}. The same reasoning holds for~\eqref{case6}: the proof will follow by the ones of cases~\eqref{case4} and~\eqref{case5} we are going to provide.

\medbreak

Let us consider~\eqref{case4}. The validity of~\eqref{good1} is given when we treat solutions $\Phi(\cdot,z_0)$ of~\eqref{system+r} with  $z_0\in l_S^2$ with $|z_0|$ sufficiently large. However, solutions $\Phi(\cdot,z_0)$ of~\eqref{system+r} with  $z_0\in l_S^1$ satisfies (no more~\eqref{bad1}, but)
\begin{equation}\label{bad1b}
\Delta\Theta(T,z_0)\in \big((j-2)\pi + \Delta\zeta\,,\, j\pi + \Delta\zeta \big)\,.
\end{equation}
and we need to forbid the situation $\Delta\Theta(T,z_0)\in \big((j-2)\pi + \Delta\zeta\,,\, (j-1)\pi + \Delta\zeta \big]$. Arguing as above, we can consider a diverging sequence $(z_n^0)_n\subset \ell_S^1$ such that
$$
\Delta\Theta(T,z_n^0)\in \big((j-2)\pi + \Delta\zeta\,,\, (j-1)\pi + \Delta\zeta \big]\,.
$$
We introduce similarly the sequence $(w_n)_n$, and prove that it converges to a solution $w$ of~\eqref{seqlimit}. Now, we claim that $\Gamma\equiv 0$. In this case~\eqref{anglelim} is replaced by
\begin{equation}\label{anglelim2}
\Delta\Theta(T,w^0)\in \big[(j-2)\pi + \Delta\zeta\,,\, (j-1)\pi + \Delta\zeta \big]\,.
\end{equation}
Then, we introduce $\bar t >T$ such that $\Delta\Theta(\bar t ,w^0)=(j-1)\pi + \Delta\zeta$ and compute, since $- \frac{\theta'(t)}{2V_1(\cos\theta(t),\sin\theta(t))}\geq 1 $, the validity of 
$$
\int^{\zeta_S}_{\zeta_S-((j-1)\pi + \Delta\zeta)} \frac{d\theta}{2V_1(\cos\theta,\sin\theta)}=b_j^1 \geq \bar t 
$$
bringing us to the conclusion $T=b_j^1 \geq \bar t \geq T$, hence $T=\bar t$.

Once proved that $w$ cover the angle $(j-1)\pi + \Delta\zeta$ in the interval $[0,T]$, we can introduced 
the polar coordinates induced by $\varphi_{V_1}$: 
\begin{equation}\label{polmodV1}
w(t)=r(t)\varphi_{V_1}(t+\omega(t))	\,.
\end{equation}
A standard computation provides
\begin{equation}\label{radV1}
r'(t) = -r(t) \Gamma(t) \pscal{\nabla V_2(\varphi_{V_1}(t+\omega(t)))}{\varphi_{V_1}'(t+\omega(t))}\,,
\end{equation}
\begin{equation}\label{angV1}
\omega'(t)=\Gamma(t)(2V_2(\varphi_{V_1}(t+\omega(t)))-1)\,.
\end{equation}
Again $\omega(0)=\omega(T)=\tau_{0,V_1}$ and we can similarly conclude that $\Gamma\equiv0$ almost everywhere in $[0,T]$, thus obtaining
$w(t)= C \varphi_{V_1}(t+\tau_{0,V_1})$ for a suitable positive constant $C$. Then, 
introducing standard polar coordinates for the sequence~$(w_n)_n$, we can prove that
$$
\int_{\theta_n(T)}^{\theta_n(0)} \frac{d\theta}{2V_1(\cos\theta,\sin\theta)} \leq T
$$
for large indexes $n$. Similarly as above we can compute
$$
\int_{\theta_n(T)}^{\theta_n(0)} \frac{d\theta}{2V_1(\cos\theta,\sin\theta)} = T + \int_0^T \frac{\pscal{\mathcal G(t,z_n(t))}{z_n(t)}}{2V_1(z_n(t))} =: T + \mathcal X_n\,,
$$
where now $\mathcal G(t,z)=g(t,z)+p(t,z)-\nabla V_1(z)$ and $\mathcal X_n\leq 0$ for every $n$.
Parametrizing the solutions $z_n$ in the polar coordinates induced by $\varphi_{V_1}$, i.e. 
$z_n(t)= r_n(t) \varphi_{V_1}(t+\omega_n(t))$
we obtain 
\begin{eqnarray*}
0&\geq& \liminf_{n\to+\infty} \|z_n\|_{\infty} \mathcal X_n\\
&\geq& \int_0^T \liminf_{n\to+\infty} \frac{\pscal{\mathcal G(t,r_n(t)\varphi_{V_1}(t+\omega_n(t)))}{\varphi_{V_1}(t+\omega_n(t))}}{\frac{r_n(t)}{\|z_n\|_\infty}} \, dt\\
&\geq& \frac1C \int_0^T \liminf_{(\lambda,\omega)\to(+\infty,\tau_{0,V_1})} \pscal{\mathcal G(t,\lambda\varphi_{V_1}(t+\omega))}{\varphi_{V_1}(t+\omega)} \, dt\,.
\end{eqnarray*}
Finally, the last inequality, using~\eqref{LLliminf}, can be rewritten as $\mathcal J^-(\tau_{0,V_2}) \leq 0$ which contradicts Assumption~\ref{assLL}.

\medbreak

This prove the case~\eqref{case4}.

\medbreak

The modification needed to prove the case~\eqref{case5} from the previous situation, are similar to the ones provided when~\eqref{case2} holds.
\end{proof}

\subsection{Double resonance}\label{secdoubleres}

In this section we consider the case
$T\in \partial\widetilde{\mathcal I}\setminus \partial \mathcal I$,
where $\mathcal I$ and $ \widetilde{\mathcal I}$ were introduced in~\eqref{union} and~\eqref{uniontilde}. In particular, $T= \beta_{j-1}=\alpha_{j}$ for a certain $j\in\N$.

In such a situation one among the alternatives~\eqref{case1}--\eqref{case3} is fulfilled and one among~\eqref{case4}--\eqref{case6}. We thus need to introduce a double Landesman-Lazer type of condition in order to find solutions to~\eqref{problem+r}, i.e. the validity of two of the requirements in Assumption~\ref{assLL} is necessary.

\begin{theorem}[Double resonance]\label{thmLL2}
Consider the problem~\eqref{system+r}, where $g$ satisfies Assumption~\ref{ass1} and $p$ satisfies Assumption~\ref{ass+p}. If
$T\in \partial\widetilde{\mathcal I}\setminus \partial \mathcal I$, then there exists at least one solution of~\eqref{problem+r} provided that Landesman-Lazer Assumptions~\ref{assLL} are fulfilled.
\end{theorem}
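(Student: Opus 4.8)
The plan is to combine the two simple-resonance arguments from the proof of Theorem~\ref{thmLL} in a single shooting scheme. Since $T=\beta_{j-1}=\alpha_j$, one of the conditions \eqref{case1}--\eqref{case3} holds (governing what happens when we shoot from $l_S^1$ and/or $l_S^2$ with the homogeneity of $V_2$, i.e.\ from ``above'') and simultaneously one of \eqref{case4}--\eqref{case6} holds (governing the behaviour with the homogeneity of $V_1$, i.e.\ from ``below''). Concretely, if $|z_0|$ is large then, by Proposition~\ref{propnonres} and \eqref{not1p}--\eqref{not5p}, a solution starting from $l_S^i$ covers an angle that lies in an interval of the form $\bigl((j-2)\pi+\Delta\zeta,\,(j+1)\pi+\Delta\zeta\bigr)$, the exact endpoints depending on $i$ and on which of \eqref{case1}--\eqref{case6} are active. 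The target angles we need to hit (so that $\Phi(T,z_0)\in l_A$ in the desired component sense of Remark~\ref{remnonres}) are $(j-1)\pi+\Delta\zeta$ and $j\pi+\Delta\zeta$; the goal is to show that for $|z_0|$ large the covered angle $\Delta\Theta(T,z_0)$ is strictly trapped between two consecutive such values — i.e.\ it can be neither ``too large'' nor ``too small''. The ``too large'' direction (covered angle $\geq j\pi+\Delta\zeta$, or the relevant boundary value) is excluded exactly by the argument already written out for cases \eqref{case1}, \eqref{case2}, \eqref{case3}, which uses the $V_2$-parametrization, the identity $2V_2(\varphi_{V_2})\equiv1$, the inequality $V_1\le V_2$, and the Landesman--Lazer condition $\mathcal J^+<0$ at the appropriate shift $\tau_{0,V_2}$ and/or $\tau_{0,V_2}+\tau_{1,V_2}+\sigma_{1,V_2}$. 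The ``too small'' direction is excluded by the mirror argument written out for cases \eqref{case4}, \eqref{case5}, \eqref{case6}, using the $V_1$-parametrization, $2V_1(\varphi_{V_1})\equiv1$, the same inequality $V_1\le V_2$ read the other way, and $\mathcal J^->0$ at the shift $\tau_{0,V_1}$ and/or $\tau_{0,V_1}+\tau_{1,V_1}+\sigma_{1,V_1}$.

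So the steps, in order, are as follows. First I would fix $j$ with $T=\beta_{j-1}=\alpha_j$ and identify which of \eqref{case1}--\eqref{case3} and which of \eqref{case4}--\eqref{case6} hold, hence which two items of Assumption~\ref{assLL} are in force. Second, for each of the two starting semilines $l_S^1$, $l_S^2$ I would invoke Proposition~\ref{propnonres} to get, for $|z_0|$ large, the a priori two-sided localization of $\Delta\Theta(T,z_0)$; combined with the existence of $R_2$ guaranteeing that solutions large at one instant stay large on $[0,T]$ (the elastic/guiding-curve estimate, already quoted), this reduces everything to the two extremal angular values. Third, I would rule out the upper extremal value: assume a diverging sequence $z_n^0$ on the relevant $l_S^i$ with $\Delta\Theta(T,z_n^0)$ at least the critical angle, normalize $w_n=z_n/\|z_n\|_\infty$, pass to the $H^1$-weak / uniform limit to obtain a solution $w$ of \eqref{seqlimit} with $0\le\Gamma\le1$, show via the monotonicity argument that $\Gamma\equiv1$ and $w=C\varphi_{V_2}(t+\tau_{0,V_2})$ (or the shifted version), and then derive $\mathcal J^+\ge0$ at that shift, contradicting Assumption~\ref{assLL}. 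Fourth, symmetrically, rule out the lower extremal value using the $V_1$-side argument to get $\Gamma\equiv0$, $w=C\varphi_{V_1}(\cdot+\tau_{0,V_1})$, and the contradiction $\mathcal J^-\le0$. If \eqref{case3} (resp.\ \eqref{case6}) is the active alternative, one simply runs Step~3 (resp.\ Step~4) for \emph{both} semilines, as explained in the proof of Theorem~\ref{thmLL}. Finally, with both extremal values forbidden, $\Delta\Theta(T,z^1)$ and $\Delta\Theta(T,z^2)$ each lie strictly inside $\bigl((j-1)\pi+\Delta\zeta,\,j\pi+\Delta\zeta\bigr)$ for $|z^i|$ large, so $\Phi(T,z^1)$ and $\Phi(T,z^2)$ lie in different components of $\R^2\setminus l_A$; shooting along $\sigma\mapsto\Phi\bigl(T,\sigma(\cos\zeta_S,\sin\zeta_S)\bigr)$, $\sigma\in[-R,R]$, and using continuity (Remark~\ref{remnonres} and Figure~\ref{fig:eta}) yields a $\bar\sigma$ with $\Phi(T,\bar\sigma(\cos\zeta_S,\sin\zeta_S))\in l_A$, i.e.\ a solution of \eqref{problem+r}.

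The main point to be careful about — though not really an obstacle, since both halves are already carried out separately in the proof of Theorem~\ref{thmLL} — is the bookkeeping: one must check that when both a $\mathcal J^+$ condition and a $\mathcal J^-$ condition are imposed, the upper-exclusion argument only ever needs the $V_2$-homogeneity and $\mathcal J^+$, while the lower-exclusion argument only ever needs the $V_1$-homogeneity and $\mathcal J^-$, so that the two contradiction arguments do not interfere and each uses exactly the piece of Assumption~\ref{assLL} that is available. One must also verify that the a priori localization from Proposition~\ref{propnonres}, when $T$ sits simultaneously at $\beta_{j-1}$ and $\alpha_j$, indeed only allows the covered angle to reach down to $(j-1)\pi+\Delta\zeta$ from above and up to $j\pi+\Delta\zeta$ from below (this is where the orderings $\alpha_j<\alpha_{j+1}$, $\beta_j<\beta_{j+1}$ and the definitions of $a_j^i,b_j^i$ enter), after which the two boundary-angle exclusions close the argument. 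Everything else is a verbatim repetition of the corresponding passages in the proof of Theorem~\ref{thmLL}, so I would state the result and refer to that proof for the details of each half.
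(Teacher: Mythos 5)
Your proposal is correct and follows essentially the same route as the paper: the paper's proof likewise reduces to the worst case where both \eqref{case3} and \eqref{case6} hold, notes that for $|z_0|$ large one has $\Delta\Theta(T,z_0)\in\bigl((j-2)\pi+\Delta\zeta,\,(j+1)\pi+\Delta\zeta\bigr)$, and rules out the two extremal subintervals by running the $V_2$/$\mathcal J^+$ and $V_1$/$\mathcal J^-$ contradiction arguments of Theorem~\ref{thmLL} on (up to) four diverging sequences, two from each semiline, before concluding with the shooting argument of Remark~\ref{remnonres}. Your bookkeeping remarks about which half of Assumption~\ref{assLL} feeds which exclusion match the paper's treatment exactly.
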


\begin{proof}
We focus our attention on the situation which presents all the difficulties: we thus assume that both~\eqref{case3} and~\eqref{case6} hold.

Let us consider $z_0\in l_S=  l_S^1\cup l_S^2$ and the solution $\Phi(\cdot,z_0)$ of~\eqref{system+r}. If $|z_0|$ is sufficiently large, we have (no more~\eqref{bad1} or~\eqref{bad1b}, but)
\begin{equation}\label{bad2}
\Delta\Theta(T,z_0)\in \big((j-2)\pi + \Delta\zeta\,,\, (j+1)\pi + \Delta\zeta \big)\,.
\end{equation}

We need to avoid the situation
$$
\Delta\Theta(T,z_0)\in
 \big((j-2)\pi + \Delta\zeta\,,\, (j-1)\pi + \Delta\zeta \big]\cup
\big[j\pi + \Delta\zeta\,,\, (j+1)\pi + \Delta\zeta \big)\,.
$$
for every $z_0\in l_S$ such that $|z_0|$ is sufficiently large.

We assume the existence of four diverging sequences: $(z_{n,1}^0)_n\subset\ell_S^1$, $(z_{n,2}^0)_n\subset\ell_S^1$, $(z_{n,3}^0)_n\subset\ell_S^2$, $(z_{n,4}^0)_n\subset\ell_S^2$, such that
\begin{eqnarray*}
&&\Delta\Theta(T,z_{n,1}^0)\in \big((j-2)\pi + \Delta\zeta\,,\, (j-1)\pi + \Delta\zeta \big] \,,\\
&&\Delta\Theta(T,z_{n,2}^0)\in \big[j\pi + \Delta\zeta\,,\, (j+1)\pi + \Delta\zeta \big) \,,\\
&&\Delta\Theta(T,z_{n,3}^0)\in \big((j-2)\pi + \Delta\zeta\,,\, (j-1)\pi + \Delta\zeta \big] \,,\\
&&\Delta\Theta(T,z_{n,4}^0)\in \big[j\pi + \Delta\zeta\,,\, (j+1)\pi + \Delta\zeta \big) \,.
\end{eqnarray*}
For all the four sequences, thanks to the validity of the Landesman-Lazer Assumption~\ref{assLL}, adapting the proof of Theorem~\ref{thmLL} we will get a contradiction.

In this way, we obtain the existence of $R>0$ such that, if $z_0\in l_S^2\cup l_S^1$ and $|z_0|>R$ then the solution $\Phi(\cdot,z_0)$ of~\eqref{system+r} satisfies
\begin{equation*}
\Delta\Theta(T,z_0)\in \big((j-1)\pi + \Delta\zeta\,,\, j\pi + \Delta\zeta \big)\,.
\end{equation*}
Hence, we are in the situation of Remark~\ref{remnonres} and we can conclude as in the proof of Theorem~\ref{thm-nonres}.
\end{proof}

\section{Application to asymmetric nonlinearities}\label{secapp}

In this section we focus our attention on scalar differential equations
\begin{equation}\label{sc2}
x''+f(t,x)=0\,,
\end{equation}
where $f$ is a continuous function satisfying
\begin{equation}\label{fm}
0<\nu_1 \leq\liminf_{x\to-\infty} \frac{f(t,x)}{x}\leq\limsup_{x\to-\infty} \frac{f(t,x)}{x} \leq \nu_2\,,
\end{equation}
\begin{equation}\label{fp}
0<\mu_1 \leq\liminf_{x\to+\infty} \frac{f(t,x)}{x}\leq\limsup_{x\to+\infty} \frac{f(t,x)}{x} \leq \mu_2\,,
\end{equation}
uniformly with respect to $t$ (we assume $f$ to be continuous just to simplify the argument).

We address the reader to~\cite[Section 3]{BosGar} for a comparison with the case $\nu=\nu_1=\nu_2$, $\mu=\mu_1=\mu_2$. In this section we extend the results presented there. In particular, we will focus our attention only on the Dirichlet problems. The case of problems with Neumann boundary conditions or mixed boundary conditions $x'(0)=x(T)=0$, which are treated in~\cite{BosGar} too, is left to the reader as an exercise, for briefness.

\medbreak

Setting $z(t)=(x(t),x'(t))$, we can write equation~\eqref{sc2} in the form of a planar system as in~\eqref{plan-mine} where $2V_i=x'(t)^2+\mu_i (x^+(t))^2 +\nu_i (x^-(t))^2 $. The planar system $Jz'=\nabla V_i(z)$, is nothing else but the asymmetric oscillator
$$
x''+ \mu_i x^+ - \nu_i x^- =0\,,
$$
and admits periodic solutions of period $\tau_{V_i} = \frac{\pi}{\sqrt{\mu_i}}+\frac{\pi}{\sqrt{\nu_i}}$ of the form $z(t)=(x(t),x'(t))$ with $x(t)=C\phi_{\mu_i,\nu_i}(t+t_0)$ where $C\in \R^+,\, t_0\in\R$ and
$$
\phi_{\mu_i,\nu_i}(t) :=
\begin{cases}
\ds \frac{1}{\sqrt{\mu_i}} \sin \left(\sqrt{\mu_i} t\right) & \ds t\in \left[0,\frac{\pi}{\sqrt{\mu_i}}\right]\,,\\ \\
\ds \frac{1}{\sqrt{\nu_i}} \sin \left(\sqrt{\nu_i} \left(\frac{\pi}{\sqrt{\mu_i}} -t\right)\right) & \ds t\in \left[\frac{\pi}{\sqrt{\mu_i}}, \frac{\pi}{\sqrt{\mu_i}}+\frac{\pi}{\sqrt{\nu_i}}\right]\,.
\end{cases}
$$
Concerning the problem with Dirichlet boundary conditions, we can compute the constants introduced in Section~\ref{auto}:
$$
\tau_{0,V_i}=\sigma_{1,V_i}=\sigma_{2,V_i}=0\,, \quad \tau_{1,V_i}= \frac{\pi}{\sqrt\mu_i}\,, \quad \tau_{2,V_i}= \frac{\pi}{\sqrt\nu_i}\,.
$$
In particular, 
the scalar problem
\begin{equation}\label{ASD}
\begin{cases}
x''+ \mu_i x^+ - \nu_i x^- =0\,,\\
x(0)=0=x(T)\,,
\end{cases}
\end{equation}
has nontrivial solutions if one of the following identities holds for a certain $k\in\N$:
\begin{eqnarray}
&&T= \alpha_{i,k}:=\pi \left[(k+1) \frac{1}{\sqrt{\mu_i}} + k \frac{1}{\sqrt{\nu_i}}\right]\,, \label{DR1}\\
&&T= \beta_{i,k}:=\pi \left[k \frac{1}{\sqrt{\mu_i}} + (k+1) \frac{1}{\sqrt{\nu_i}}\right]\,, \label{DR2}\\
&&T= \gamma_{i,k}:=\pi (k+1) \left[ \frac{1}{\sqrt{\mu_i}} +  \frac{1}{\sqrt{\nu_i}}\right]\,. \label{DR3}
\end{eqnarray}
They are indeed equivalent to~\eqref{resonancesetV1}-\eqref{resonancesetV4}.
In particular, $\phi_{\mu_i,\nu_i}$ solves~\eqref{ASD} when~\eqref{DR1} or~\eqref{DR3} holds, while 
$$
\psi_{\mu_i,\nu_i}(t) := \phi_{\mu_i,\nu_i}\left(t+\pi/\sqrt{\mu_i}\right)
$$
solves it when~\eqref{DR2} or~\eqref{DR3} holds.

Let us now focus on the Dirichlet problem
\begin{equation}\label{psc2}
\begin{cases}
x''+f(t,x)=0\,,\\
x(0)=0=x(T)\,,
\end{cases}
\end{equation}
where $f$ satisfies~\eqref{fm} and~\eqref{fp}.
The resonance set $\mathcal I$ in~\eqref{union} is now
$$
\mathcal I = \bigcup_{k>0}
[\min\{\alpha_{2,k},\beta_{2,k}\}, \max \{\alpha_{1,k},\beta_{1,k}\} ] \cup [\gamma_{2,k},\gamma_{1,k}]\,.
$$
The resonance set $\mathcal I$ is useful when we want to investigate resonance phenomena when $T$ varies and the constants $\mu_1,\mu_2,\nu_1,\nu_2$ are fixed a priori. On the contrary, fixing $T$, we can study resonance when the other constants change using the set $\Sigma_D$ known as the Dancer-Fu\v c\'ik spectrum associated to~\eqref{ASD}.
 We recall that, for a fixed $T$, the set $\Sigma_D$ collects all the couples $(\mu,\nu)$ in the first quadrant $Q=(\R^+)^2$ satisfying one among~\eqref{DR1}-\eqref{DR3},
and it consists of an infinite number of curves, see Figure~\ref{DFD},
$$
C_{a,b}= \left\{(\mu,\nu)\in Q \mid \frac{a}{\sqrt{\mu}} +  \frac{b}{\sqrt{\nu}} = \frac{T}{\pi}\right\}\,,
$$
where 
\begin{eqnarray*}
&&(a,b)\in \Gamma := \{(a,b)\in \N^2 \mid a+b>0\,, |a-b|\leq 1 \} \\
&&\phantom{(a,b)\in \Gamma :=}= \{(0,1),(1,0),(1,1),(1,2),(2,1),(2,2),\ldots\}\,.
\end{eqnarray*}
Notice that the curves $C_{k,k+1}$ and $C_{k+1,k}$ intersect in the point $(\lambda_k,\lambda_k)$, with $\lambda_k=(\pi (2k+1)/T)^2$.
Hence, we can write
$$
\Sigma_D = \bigcup_{(a,b)\in \Gamma} C_{a,b}\,.
$$

\begin{figure}[t]
\centerline{\epsfig{file=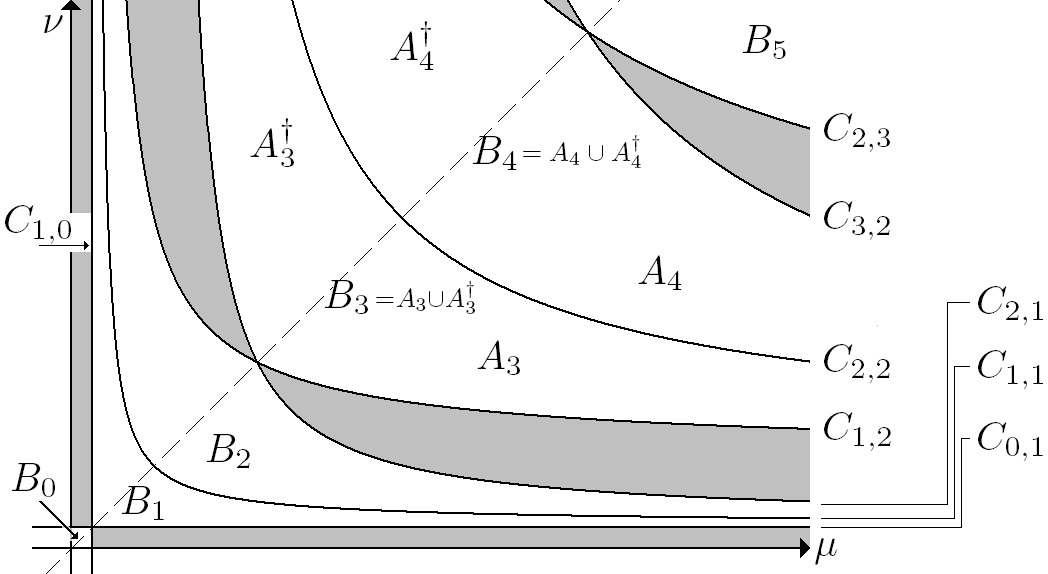, width = 11.5 cm}}
\caption{The Dancer-Fu\v c\'ik spectrum for the Dirichlet problem. The open rectangle $\mathcal R = (\mu_1,\mu_2)\times (\nu_1,\nu_2)$ can't intersect the forbidden region in grey and the spectrum lines $C_{a,b}$. Besides, the vertices of $\mathcal R$ can intersect the spectrum.}
\label{DFD}
\end{figure}

Let us introduce the sets
$$
A_0= \left\{ (\mu,\nu)\in Q \mid \mu\geq\nu\,, \pi >T\sqrt\mu\,\right\}
$$
and, for every integer $j\geq 1$,
$$
A_{2j-1} = \left\{ (\mu,\nu)\in Q \,\Big|\, \mu\geq\nu\,;\, j \left(\frac{\pi}{\sqrt{\mu}}+\frac{\pi}{\sqrt{\nu}}\right)-\frac{\pi}{\sqrt{\mu}} <T<
j \left(\frac{\pi}{\sqrt{\mu}}+\frac{\pi}{\sqrt{\nu}}\right) \right\}\,,
$$
$$
A_{2j} = \left\{ (\mu,\nu)\in Q \,\Big|\, \mu\geq\nu\,;\, j \left(\frac{\pi}{\sqrt{\mu}}+\frac{\pi}{\sqrt{\nu}}\right) <T<
j \left(\frac{\pi}{\sqrt{\mu}}+\frac{\pi}{\sqrt{\nu}}\right)+\frac{\pi}{\sqrt{\mu}} \right\}\,.
$$
Then we define $A_k^\dagger=\{(\mu,\nu)\in Q \mid (\nu,\mu)\in A_k\}$, which is the specular of $A_k$ with respect to the line $\mu=\nu$.
Finally, define $B_k=A_k\cup A_k^\dagger$ and $B=\bigcup_{k\geq0} B_k$, see Figure~\ref{DFD}, and the open rectangle
$$
\mathcal R = (\mu_1,\mu_2)\times(\nu_1,\nu_2)\,.
$$

Concerning the resonance phenomenon, 
we can summarize the possible situations verifying the position of the open rectangle $\mathcal R$ with respect to the Dancer-Fu\v c\'ik spectrum as follows:
\begin{itemize}
\item {\sl Nonresonance}: $\overline{\mathcal R} \subset B$,
\item {\sl Resonance}: ${\mathcal R} \subset B$ and $\overline{\mathcal R} \cap \Sigma_D\neq \varnothing$. In particular, in such a situation, one ({\sl simple resonance}) or both ({\sl double resonance}) the points $(\mu_1,\nu_1)$ and $(\mu_2,\nu_2)$ belongs to $\Sigma_D$.
\end{itemize}
In particular, we need to avoid the situation ``$\mathcal R$ {\sl intersects} $Q \setminus(B\cup \Sigma_D)$'' (roughly speaking, $\mathcal R$ can't intersect the grey-coloured region in Figure~\ref{DFD}).

\medbreak

We now focus our attention on the Landesman-Lazer conditions we need to introduce. At first we define the values
\begin{multline*}
\mathcal A^-(\zeta) := \int_{\{\zeta>0\}} \left(\liminf_{x\to+\infty} f(t,x)-\mu_1 x \right) \zeta(t)\,dt \,+\\
\int_{\{\zeta<0\}} \left(\limsup_{x\to-\infty} f(t,x)-\nu_1 x \right) \zeta(t)\,dt \,,
\end{multline*}
\begin{multline*}
\mathcal A^+(\zeta) := \int_{\{\zeta>0\}} \left(\limsup_{x\to+\infty} f(t,x)-\mu_2 x \right) \zeta(t)\,dt \, +\\
\int_{\{\zeta<0\}} \left(\liminf_{x\to-\infty} f(t,x)-\nu_2 x \right) \zeta(t)\,dt \,.
\end{multline*}

Collecting all the possibile situations in a unique statement we can summarize the Landesman-Lazer Assumption~\ref{assLL} for the Dirichlet problem~\eqref{psc2} as follows.
\begin{assumption}\label{assLLD}
Assume ${\mathcal R} \subset B$ and 
\begin{eqnarray*}
\text{if } (\mu_1,\nu_1) \in C_{h,h+1} &\text{then}& \mathcal A^-(\psi_{\mu_1,\nu_1})>0\,,\\
\text{if } (\mu_1,\nu_1) \in C_{h+1,h} &\text{then}& \mathcal A^-(\phi_{\mu_1,\nu_1})>0\,,\\
\text{if } (\mu_1,\nu_1) \in C_{h,h} &\text{then}& \mathcal A^-(\phi_{\mu_1,\nu_1})>0 \text{ and } \mathcal A^-(\psi_{\mu_1,\nu_1})>0\,,\\
\text{if } (\mu_2,\nu_2) \in C_{k,k+1} &\text{then}& \mathcal A^+(\psi_{\mu_2,\nu_2})<0\,,\\
\text{if } (\mu_2,\nu_2) \in C_{k+1,k} &\text{then}& \mathcal A^+(\phi_{\mu_2,\nu_2})<0\,,\\
\text{if } (\mu_2,\nu_2) \in C_{k,k} &\text{then}& \mathcal A^+(\phi_{\mu_2,\nu_2})<0 \text{ and } \mathcal A^+(\psi_{\mu_2,\nu_2})<0\,.\\
\end{eqnarray*}
\end{assumption}
\begin{remark}
Concerning the previous assumption,
if $\mathcal R\subset B_{2j}$ for a positive integer $j$ then only the case $(\mu_1,\nu_1) \in C_{j,j}$ can hold, and similarly we can have $(\mu_2,\nu_2) \in C_{j,j+1}$ if $\mu\leq\nu$ or $(\mu_2,\nu_2) \in C_{j+1,j}$ if $\mu\geq\nu$.
Similarly, if $\mathcal R\subset B_{2j-1}$ for a positive integer $j$ then only the case $(\mu_2,\nu_2) \in C_{j,j}$ can hold, and similarly we can have $(\mu_1,\nu_1) \in C_{j,j-1}$ if $\mu\leq\nu$ or $(\mu_1,\nu_1) \in C_{j-1,j}$ if $\mu\geq\nu$.
\end{remark}

Let us conlcude this paper with the existence theorem for the Dirichlet problem.

\begin{theorem}\label{TD}
Consider the problem~\eqref{psc2} where the continuous function $f$ satisfies~\eqref{fm} and~\eqref{fp}.

If Assumption~\ref{assLLD} holds, then there exists at least one solution of~\eqref{psc2}.
\end{theorem}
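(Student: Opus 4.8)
The plan is to reduce Theorem~\ref{TD} to the abstract double/simple resonance results (Theorems~\ref{thm-nonres}, \ref{thmLL} and~\ref{thmLL2}) by writing the scalar equation as a planar system and carefully matching the abstract Landesman-Lazer quantities $\mathcal J^\pm$ with the explicit scalar ones $\mathcal A^\pm$. First I would set $z=(x,x')$ and recall from the discussion preceding the theorem that \eqref{sc2} takes the form $Jz'=g(t,z)+p(t,z)$ with $2V_i(x,y)=y^2+\mu_i(x^+)^2+\nu_i(x^-)^2$; the control condition of Assumption~\ref{ass1} is a consequence of \eqref{fm}--\eqref{fp}, choosing $\gamma(t,z)$ so that $g(t,z)=(0,-f(t,x)+\text{(linear interpolation terms)})$ lies between $\nabla V_1$ and $\nabla V_2$ as needed, while the bounded remainder $p$ collects the sublinear part and satisfies Assumption~\ref{ass+p} by \eqref{fm}--\eqref{fp}. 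The Dirichlet condition $x(0)=x(T)=0$ becomes $z(0),z(T)\in l_S=l_A=\{x=0\}$, so $\zeta_S=\zeta_A=\pi/2$ and $\Delta\zeta=\pi$; with the computed constants $\tau_{0,V_i}=\sigma_{1,V_i}=\sigma_{2,V_i}=0$, $\tau_{1,V_i}=\pi/\sqrt{\mu_i}$, $\tau_{2,V_i}=\pi/\sqrt{\nu_i}$, the resonance set $\mathcal I$ and the alternatives \eqref{case1}--\eqref{case6} translate exactly into the three types of Dancer-Fu\v c\'ik curves $C_{a,b}$ through $(\mu_1,\nu_1)$ and $(\mu_2,\nu_2)$.

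Next I would verify the position hypothesis: the assumption $\mathcal R\subset B$ together with (the vertices possibly being on $\Sigma_D$) says precisely that $T$ lies in $\mathcal I^c$, in $\partial\mathcal I$, or in $\partial\widetilde{\mathcal I}\setminus\partial\mathcal I$ — i.e. nonresonance, simple resonance, or double resonance — and never in the forbidden grey region $Q\setminus(B\cup\Sigma_D)$. This is a bookkeeping step: one checks that $(\mu_2,\nu_2)$ on a curve $C_{a,b}$ corresponds to $T=\alpha_j$ for the appropriate $j$ (via $a+b=j$, and $a<b$, $a>b$, $a=b$ distinguishing \eqref{case1},\eqref{case2},\eqref{case3}), and symmetrically $(\mu_1,\nu_1)$ on $C_{a,b}$ gives $T=\beta_{j-1}$ with the roles of $\mu,\nu$ swapped (giving \eqref{case4},\eqref{case5},\eqref{case6}). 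Once this is in place, Theorem~\ref{thm-nonres} covers the nonresonance case directly, and it only remains to show that Assumption~\ref{assLLD} implies Assumption~\ref{assLL} for the system under consideration.

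The heart of the proof — and the main obstacle — is the explicit identification $\mathcal J^+(\tau_{0,V_2})$ (and its analogues at the other base angles) with $\mathcal A^+$ evaluated at the corresponding eigenfunction $\phi_{\mu_2,\nu_2}$ or $\psi_{\mu_2,\nu_2}$, and similarly $\mathcal J^-$ with $\mathcal A^-$. Here one must carefully compute $\pscal{G(t,\lambda\varphi_{V_2}(t+\omega))}{\varphi_{V_2}(t+\omega)}-2\lambda V_2(\varphi_{V_2}(t+\omega))$: writing $\varphi_{V_2}(t+\omega)=(\phi(t),\phi'(t))$ with $\phi=\phi_{\mu_2,\nu_2}(\cdot+\text{shift})$, the first component of $G$ is $y$, which cancels against the first component of $\nabla V_2$, so only the second component survives, yielding (up to the factor coming from $\langle\cdot,\cdot\rangle$) an integrand of the form $(\,f(t,\lambda\phi(t))-\mu_2\lambda\phi(t)\,)\phi(t)$ on the set where $\phi(t)>0$ and $(\,f(t,\lambda\phi(t))-\nu_2\lambda\phi(t)\,)\phi(t)$ where $\phi(t)<0$ — because $\lambda\phi(t)\to\pm\infty$ exactly on $\{\phi>0\}$, $\{\phi<0\}$. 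Taking the $\limsup$ as $\lambda\to+\infty$ (uniformly in $\omega\to\theta$, using the uniform-in-$t$ hypotheses \eqref{fm}--\eqref{fp} so the $\limsup$ passes inside the integral and the $\omega$-dependence only shifts the zero set, which has measure zero) reproduces exactly $\mathcal A^+(\phi_{\mu_2,\nu_2})$, respectively $\mathcal A^+(\psi_{\mu_2,\nu_2})$, depending on whether the base angle is $\tau_{0,V_2}$ or $\tau_{0,V_2}+\tau_{1,V_2}+\sigma_{1,V_2}$; the sign-reversed computation with $V_1$, $\liminf$, and $\lambda\to+\infty$ gives the $\mathcal J^-$/$\mathcal A^-$ correspondence. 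The delicate points are the uniformity allowing the $\limsup/\liminf$ to commute with $\int_0^T$, and the fact that the finitely many instants where $\phi$ vanishes contribute nothing, so that the signs strictly match. With this dictionary established, each line of Assumption~\ref{assLLD} is exactly one line of Assumption~\ref{assLL}, and the conclusion follows from Theorem~\ref{thmLL} (simple resonance) or Theorem~\ref{thmLL2} (double resonance).
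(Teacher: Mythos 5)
Your proposal is correct and follows exactly the route the paper intends: the paper gives no explicit proof of Theorem~\ref{TD}, relying instead on the setup of Section~\ref{secapp} (the planar reformulation with $2V_i=y^2+\mu_i(x^+)^2+\nu_i(x^-)^2$, the computed constants $\tau_{0,V_i}=\sigma_{1,V_i}=\sigma_{2,V_i}=0$, and the translation of $\mathcal I$ and $\Sigma_D$) followed by an appeal to Theorems~\ref{thm-nonres}, \ref{thmLL} and~\ref{thmLL2}, and your identification of $\mathcal J^\pm$ with $\mathcal A^\pm$ via $\varphi_{V_i}=(\phi_{\mu_i,\nu_i},\phi_{\mu_i,\nu_i}')$ is the right (and correctly executed) dictionary. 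Only trivial slips remain (the roles of the first and second components of $G$ and $\nabla V_2$ are interchanged, and the index bookkeeping $a+b=j$ is off by one), none of which affects the argument.
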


\section*{Acknowledgements}

The author thanks Alessandro Fonda and Maurizio Garrione for the useful suggestions and discussions.

\end{document}